\newcommand{\hide}[1]{}
\title{\Large Universal locally univalent functions and universal conformal metrics with constant curvature}
\author{\large Daniel Pohl\thanks{\texttt{daniel.pohl@mathematik.uni-wuerzburg.de}} }
\author{\large Oliver Roth\thanks{\texttt{roth@mathematik.uni-wuerzburg.de},
    Phone: +49 931 318 4974} }
\affil{\small \textsc{Department of Mathematics, University of W\"urzburg\\
			Emil Fischer Stra\ss e 40, 97074 W\"urzburg, Germany}}
\date{\normalsize \today}
\numberwithin{equation}{section}
\newcommand{\N}{\mathbb{N}}
\newcommand{\Z}{\mathbb{Z}}
\newcommand{\R}{\mathbb{R}}
\newcommand{\C}{\mathbb{C}}
\newcommand{\D}{\mathbb{D}}
\newcommand{\limn}{\lim_{n\rightarrow\infty}}
\newcommand{\limm}{\lim_{m\rightarrow\infty}}
\newcommand{\dz}{\left|dz\right|}
\newcommand{\dist}{\mathrm{dist}}
\newcommand{\hol}{\mathcal{H}}
\newcommand{\M}{\mathcal{M}}
\newcommand{\B}{\mathcal{B}}
\renewcommand{\O}{\mathcal{O}}
\newcommand{\aut}[1]{\mathrm{Aut}\left(#1\right)}
\newtheoremstyle{plain1}{9pt}{9pt}%
                                {\itshape}%
                                {-0pt}%
                                {\sffamily\bfseries}{.}%
                                {\newline}%
                                {}%
\theoremstyle{plain1}
\newtheorem{thm}{Theorem}[section]					
\newtheorem{cor}[thm]{Corollary}					
\newtheorem{prop}[thm]{Proposition}			  
\theoremstyle{definition}										
\newtheorem{example}[thm]{Example}
\newtheorem{definition}[thm]{Definition}
\newtheorem{remark}[thm]{Remark}
\theoremstyle{plain1}\newtheorem{problem}[thm]{Problem}
\renewcommand\qedsymbol{\ensuremath{\blacksquare}}
\newenvironment{keywords}%
   {\begin{trivlist}\item[]{\bfseries\sffamily Keywords:}\ }
   {\end{trivlist}}
\newenvironment{msc}%
   {\begin{trivlist}\item[]\textit{2010 Mathematics Subject Classification:}\ }
   {\end{trivlist}}
\begin{document}

\maketitle

\begin{abstract}
	\textbf{\textsc{Abstract}}
	We prove Runge--type theorems and universality results for
         \textit{locally
        univalent} holomorphic and meromorphic functions. Refining a result of
        M.~Heins, we also show that there
        is a universal \textit{bounded} locally univalent function on the unit
        disk. These results are used to prove that
        on  any hyperbolic simply connected plane domain there exist universal
        conformal metrics with prescribed constant curvature.
\end{abstract}

\begin{keywords}
	Universal functions, Runge theory, locally univalent functions, conformal metrics, constant curvature
\end{keywords}
\begin{msc}
	30E10 $\cdot$ 30F45 $\cdot$ 30K20 $\cdot$ 30K99
\end{msc}

\section{Introduction}


Let $\Omega$ be a domain in the complex plane $\C$ and let $\hol(\Omega)$ be
the space of all holomorphic functions on $\Omega$. 
We think of $\hol(\Omega)$ as a (closed) subspace of the Fr\'echet space
$C(\Omega)$ of all complex--valued continuous functions on $\Omega$ equipped with the
compact--open topology of locally uniform convergence. We denote by 
 $\aut{\Omega}$ the group of all conformal automorphisms of $\Omega$.
A function $f\in\hol(\Omega)$ is called \textit{universal} 
if the set $\{f \circ \phi \, : \, \phi \in \aut{\Omega}\}$ is dense in
$\hol(\Omega)$, i.e., as big as it possibly can be.


\medskip

The concept of universality goes back at least to Birkhoff in 1929, who showed \cite{birkhoff1929demonstration}  that there exist
universal functions in $\hol(\C)$. Another early, related universality result
was obtained by  Seidel and\ Walsh in 1941, who proved in \cite{seidel1941approximation}  that there are
 universal functions in $\hol(\D)$, where $\D$ denotes the open unit
 disc. 

\medskip

On the other hand,  it follows from the maximum principle that there are
 no universal functions in $\hol(\C \setminus \{0\})$. In all other cases,
so when $\Omega$ is not conformally equivalent to $\C\setminus\{0\}$, then 
universal functions $f\in\hol(\Omega)$ exist if and only if $\aut{\Omega}$ is
not compact. These results are due to  Bernal--Gonz\'alez and Montes--Rodr\'iguez
\cite{bernal1995universal}. They fully characterize
 all domains $\Omega$ in $\C$  for which  universal functions $f\in\hol(\Omega)$ exist.


\medskip

The notion of universality has been modified for many other classes of
holomorphic and meromorphic functions and even beyond. We refer the reader to the survey paper
\cite{erdmann1999universal}, but  wish to explicitly point out 
three specific universality results:

\begin{itemize}
\item[(1)] (Universal bounded holomorphic functions, Heins \cite{heins1954universal})\\
There are universal \textit{bounded} holomorphic functions. In fact,  
 there is a universal Blaschke product $B$
 such that $\{B \circ \phi \, : \, \phi \in \aut{\D}\}$ is 
 dense in the set of all holomorphic self--maps of $\D$.

\item[(2)] (Universal univalent functions, Pommerenke \cite{pommerenke1964linear})\\
Let $S$ be the class of all univalent holomorphic functions $f\colon\D\rightarrow\C$ normalized by $f(0)=0$ and $f^{\prime}(0)=1$.
Then  there is a universal
function $f \in S$ in the sense that
$$\left\{ \frac{f \circ \phi-(f \circ \phi)(0)}{(f \circ \phi)'(0)} \, : \,
  \phi \in \aut{\D}\right\}$$
is dense in $S$. Note that the imposed normalization condition for the class
$S$ requires passing from $f \circ \phi$ to the ``Koebe
transforms'' $[f \circ \phi-(f \circ \phi)(0)]/(f \circ \phi)'(0)$.

\item[(3)] (Universal meromorphic functions)\\
There are Birkhoff--type universality results for meromorphic
functions (see e.g.~\cite{chan2001universal}).
\end{itemize}

The main goals of the present paper are to investigate universal
\textit{locally univalent} holomorphic and meromorphic functions 
 and, \textcolor{violet}{in particular},  \textit{universal constantly curved
conformal metrics}. A key auxiliary step consists in establishing \textit{Runge--type
results for locally univalent functions} which might be interesting in their
own right.

\medskip

Here is a quick outline of our work.
We denote for a set $M \subseteq \C$ by $\hol_{l.u.}(M)$ the
family of all  functions which are holomorphic and
locally univalent on some open neighborhood of $M$ in $\C$ (which might depend on the function).

\begin{thm}[Runge--type theorem for locally univalent holomorphic functions] \label{thm:r1}
Let $\Omega$ be a domain in $\C$ and let $K$ be a compact set in $\Omega$ such
that $\Omega \setminus K$ has no relatively compact components in $\Omega$.
Then every function $f \in \hol_{l.u.}(K)$ can be approximated
uniformly on $K$ by functions in $\hol_{l.u.}(\Omega)$.
\end{thm}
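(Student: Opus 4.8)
The plan is to handle the only genuinely new feature, local univalence, i.e.\ the non-vanishing of the derivative, by working with the \emph{logarithmic derivative} (pre-Schwarzian) rather than with the functions themselves, and to recover the approximant by exponentiating and integrating. Fix an open neighbourhood $U \supseteq K$ on which $f$ is holomorphic with $f' \neq 0$, and set $p := f''/f' \in \hol(U)$. Since $p = (\log f')'$ locally, every loop $\gamma$ in $U$ satisfies $\int_\gamma p\,dz = 2\pi i\, n(f'\circ\gamma,0) \in 2\pi i\,\Z$, the winding number of $f'$ about the origin. The idea is then: apply the classical Runge theorem (available precisely because of the hypothesis on $K$ and $\Omega$) to approximate $p$ uniformly on $K$ by some $R \in \hol(\Omega)$, put $\tilde f' := \exp\!\big(\int R\big)$ and $\tilde f := \int \tilde f'$. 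By construction $\tilde f'$ is zero-free, so $\tilde f \in \hol_{l.u.}(\Omega)$, and $\tilde f''/\tilde f' = R \approx p$ on $K$ forces $\tilde f \approx cf+d$ on $K$; two free constants of integration then match the values of $f$ and $f'$ at a base point.

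What makes this legitimate is the period bookkeeping, which is also where the hypothesis enters a second time. For $\exp(\int R)$ to be single-valued one needs $\int_\gamma R \in 2\pi i\,\Z$ for every loop $\gamma$ in $\Omega$, and for $\int \tilde f'$ to be single-valued one needs the periods $\int_\gamma \tilde f'$ to vanish. The key topological fact is that the assumption ``$\Omega\setminus K$ has no relatively compact components in $\Omega$'' forces every hole of $\Omega$ to be already surrounded by $K$, so that the inclusion is surjective on first homology, $H_1(K)\to H_1(\Omega)$. Since a period of a function in $\hol(\Omega)$ depends only on the homology class of the loop, both period conditions reduce to \emph{finitely many} conditions, one per generator $\gamma_1,\dots,\gamma_N$ of $H_1(\Omega)$, and these generators may be taken inside $K$.

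It remains to meet these finitely many conditions while keeping $R$ close to $p$ on $K$. The periods $\int_{\gamma_j} R$ are already near the integers $\int_{\gamma_j} p/(2\pi i)$, and the $\int_{\gamma_j}\tilde f'$ are already near $\int_{\gamma_j} f' = 0$, so only small corrections are needed. One realises prescribed small changes of these periods by adding to $R$ functions that are uniformly small on $K$ but carry the desired periods (for instance Runge approximations on $\Omega$ of $1/(z-b_j)$ with $b_j$ in the $j$-th hole of $K$, suitably scaled). As the integrality conditions are linear in $R$ and the vanishing-period conditions depend smoothly on $R$ with surjective derivative in these directions, a finite-dimensional inverse-function argument yields an $R$ satisfying all $2N$ conditions with $\|R-p\|_K$ arbitrarily small. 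If $K$ is disconnected one appends to this list the finitely many transition conditions fixing $\tilde f$ relative to $f$ on each component; alternatively one first replaces $f$ by a genuine Runge approximant $g\in\hol(\Omega)$, so that the values on all components are matched automatically and $\tilde f'$ need only be steered away from the zeros of $g'$.

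I expect the simultaneous period matching to be the main obstacle. It is worth stressing why a cruder route fails: one cannot merely perturb a Runge approximant $g$ of $f$ so as to remove the zeros of $g'$ while staying uniformly close to $g'$ near those zeros, since by Rouch\'e the number of zeros of the derivative inside a small circle about such a zero is stable under perturbations that are small on that circle, so the surplus winding of the derivative can only be cancelled by a \emph{large} modification. The logarithmic-derivative formulation avoids this by never trying to keep $\tilde f'$ close to $f'$ off $K$: the surplus winding is routed out to $\partial\Omega$, which is possible exactly because every component of $\Omega\setminus K$ reaches the boundary. Verifying that the finitely many period corrections can be kept arbitrarily small, and that they do not spoil the approximation on $K$, is the technical heart of the argument.
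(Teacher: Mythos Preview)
Your overall strategy---pass to a logarithmic object, approximate via Runge, then correct finitely many periods---is sound in spirit and close to what the paper does, but there is a genuine gap in the topological step. The claim that the hypothesis ``$\Omega\setminus K$ has no relatively compact components'' forces $H_1(K)\to H_1(\Omega)$ to be \emph{surjective} is false; it is in fact the other direction that holds (the map is injective, since every bounded component of $\C\setminus K$ meets $\C\setminus\Omega$). For a concrete counterexample take $\Omega=\{1/2<|z|<2\}$ and $K$ a small closed disk in $\Omega$ not encircling the origin: then $\Omega\setminus K$ is connected, yet $H_1(K)=0$ while $H_1(\Omega)\cong\Z$. In general $\Omega$ may have infinitely many holes not surrounded by $K$, so your reduction to ``finitely many period conditions, one per generator, and these generators may be taken inside $K$'' collapses. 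For loops in $\Omega$ not homologous to loops in $K$, the periods of your Runge approximant $R$ are entirely uncontrolled by the approximation on $K$, and there is no reason the required corrections are small.

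The paper resolves exactly this difficulty, but by a different route. It works with $f'$ rather than the pre-Schwarzian, first producing (via a zero-free Runge theorem plus an inverse-function-type period adjustment) functions $g_n\in\hol_{\neq 0}(\Omega)$ with $g_n\to f'$ on $K$ and $\int_\gamma g_n=0$ for all loops $\gamma$ \emph{in $K$}. It then handles the remaining periods---over the holes of $\Omega$ not surrounded by $K$, possibly infinitely many---by an iterative procedure due to Gunning and Narasimhan: exhaust $\Omega$ by $\O$-convex compacta $K=K_1\subset K_2\subset\cdots$, and at stage $k$ multiply by $e^{v_k}$ with $v_k\in\hol(\Omega)$ chosen small on $K_k$ so that the new periods vanish on $K_{k+1}$. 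The infinite product $\exp\!\big(\sum_k v_k\big)$ converges and yields a zero-free function with a global primitive on $\Omega$. Your pre-Schwarzian formulation could in principle be made to work, but it would need the same exhaustion-and-correction mechanism (applied twice: once for the periods of $R$, once for those of $\tilde f'$), and at that point working directly with $f'$ is more economical.
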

There is an analogue of Theorem \ref{thm:r1} for meromorphic
  functions, cf.~Theorem \ref{thm:loc_univ_mer_runge} below.

\begin{remark}
 It can be shown that Theorem \ref{thm:r1} also holds more generally on
  any open
  Riemann surface. Note that it is already a deep result due to Gunning 
  and Narasim\-han \cite{gunning1967immersion}  that every open Riemann
  surface carries at least one  locally univalent holomorphic function. Recently, Frostneri\v{c} \cite{forstnerivc2003noncritical} extended the
Gunning--Narasimhan theorem to Stein manifolds and Majcen \cite{majcen2007closed}
established a Runge--type theorem for holomorphic 1--forms on Stein manifolds.
We  shall use some of the ideas of these papers in  our proof of Theorem \ref{thm:r1}.
\end{remark}

\begin{thm}[Universal locally univalent functions] \label{thm:u1}
Let $\Omega$ be a simply connected domain in $\C$.
Then there exists a  function $f \in \hol_{l.u.}(\Omega)$ such that
$\{f \circ \phi \, : \, \phi \in \aut{\Omega}\}$ is dense in
$\hol_{l.u.}(\Omega)$.
\end{thm}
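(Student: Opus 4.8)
The plan is to derive Theorem \ref{thm:u1} from a Birkhoff--type transitivity argument carried out in the space $X := \hol_{l.u.}(\Omega)$, regarded as a topological subspace of the Fr\'echet space $\hol(\Omega)$, with the essential approximation input coming from Theorem \ref{thm:r1}. Since the whole assertion is conformally invariant, I would first reduce to two model domains: if $\Omega$ is hyperbolic, a Riemann map $R\colon\D\to\Omega$ induces a homeomorphism $g\mapsto g\circ R$ of $X$ onto $\hol_{l.u.}(\D)$ which conjugates the action of $\aut{\Omega}$ to that of $\aut{\D}$, so it suffices to treat $\Omega=\D$; the remaining case $\Omega=\C$ is handled identically, using the (noncompact) affine group $\aut{\C}$ in place of $\aut{\D}$.

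Next I would check that $X$ is a Baire space, which is not automatic since $\hol_{l.u.}(\Omega)=\{g\in\hol(\Omega):g'\text{ has no zero in }\Omega\}$ is neither open nor closed in $\hol(\Omega)$. Fixing an exhaustion $K_1\subset K_2\subset\cdots$ of $\Omega$ by compacts, each set $\{g\in\hol(\Omega):g'\text{ vanishes somewhere on }K_m\}$ is closed, because locally uniform convergence of $g_j$ forces locally uniform convergence of $g_j'$ and a limit of zeros of $g_j'$ on the compact $K_m$ is again a zero of $g'$. Hence the complement of $X$ is an $F_\sigma$ set, so $X$ is a $G_\delta$ subset of the Polish space $\hol(\Omega)$ and is therefore itself Polish, in particular a separable Baire space.

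I would then fix a countable dense subset $\{\phi_n\}$ of the second countable group $\aut{\Omega}$ and consider the composition operators $C_{\phi_n}\colon g\mapsto g\circ\phi_n$, which map $X$ continuously into itself (if $g'\neq0$ then $(g\circ\phi_n)'=g'(\phi_n)\,\phi_n'\neq0$). By the Birkhoff transitivity theorem it is enough to prove that $(C_{\phi_n})$ is topologically transitive, and the set of universal $f$ is then even a dense $G_\delta$ in $X$. Transitivity reduces to the following: given $g_1,g_2\in X$, a compact $K\subset\Omega$ and $\varepsilon>0$, produce $h\in X$ and $\phi\in\aut{\Omega}$ with $\sup_K|h-g_1|<\varepsilon$ and $\sup_K|h\circ\phi-g_2|<\varepsilon$; continuity of $\psi\mapsto h\circ\psi$ together with density of $\{\phi_n\}$ then lets me replace $\phi$ by a suitable $\phi_n$. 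To construct $h$, I would enlarge $K$ to a closed round disk (so that $\Omega\setminus K$ is connected) and choose a hyperbolic automorphism $\phi$ carrying $K$ onto a small round disk near a boundary point, disjoint from $K$. Then $L:=K\cup\phi(K)$ is a union of two disjoint closed disks, whence $\Omega\setminus L$ is connected and $L$ has no relatively compact complementary components. Defining $F:=g_1$ near $K$ and $F:=g_2\circ\phi^{-1}$ near $\phi(K)$ gives an element of $\hol_{l.u.}(L)$, since $\phi^{-1}$ is univalent, and Theorem \ref{thm:r1} supplies $h\in\hol_{l.u.}(\Omega)=X$ with $\sup_L|h-F|<\varepsilon$; this yields exactly $\sup_K|h-g_1|<\varepsilon$ and $\sup_{\phi(K)}|h-g_2\circ\phi^{-1}|<\varepsilon$, i.e.\ $\sup_K|h\circ\phi-g_2|<\varepsilon$.

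The step I expect to be the crux is the geometric arrangement inside the transitivity argument: one must split the prescribed data over two \emph{disjoint} compacta while keeping their union admissible for Theorem \ref{thm:r1}, i.e.\ with no relatively compact complementary components in $\Omega$, so that a \emph{single} globally locally univalent approximant exists. Reducing $K$ to a round disk and pushing it toward the boundary by an automorphism makes this transparent, but it is genuinely where the locally univalent Runge theorem is used and where control of the complementary components is indispensable. The only other point that requires care, and is easily overlooked, is the verification in the second paragraph that $X$ is a Baire space, since this is what legitimizes the appeal to the transitivity theorem.
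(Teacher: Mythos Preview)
Your proposal is correct and follows essentially the same route as the paper: the paper deduces Theorem \ref{thm:u1} from the more general Theorem \ref{thm:con_univ}, whose proof is precisely a Birkhoff transitivity argument on $\hol_{l.u.}(\Omega)$ using Theorem \ref{thm:r1} to glue the two prescribed functions over disjoint compacta moved apart by a run--away sequence in $\aut{\Omega}$. The only cosmetic differences are that the paper works directly on $\Omega$ (rather than reducing to $\D$ or $\C$) and verifies the Baire property by observing that $\hol_{l.u.}(\Omega)$ is open in the closed set $\hol_{l.u.}(\Omega)\cup\{\text{constants}\}\subseteq\hol(\Omega)$, while you argue it is $G_\delta$ in $\hol(\Omega)$; both are valid.
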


Hence there exist \textit{universal locally univalent holomorphic functions} on any simply
connected plane domain~$\Omega$. In fact, these functions 
form a dense $G_{\delta}$--subset of $\hol_{l.u.}(\Omega)$. A version of Theorem \ref{thm:u1} also holds for meromorphic
  functions, see Theorem \ref{thm:mer_loc_univ_birk} below.

\begin{remark}
 As it is the case with Theorem \ref{thm:r1}, also Theorem \ref{thm:u1} can be generalized to Riemann surfaces.
We note that Montes--Rodr\'iguez \cite{montes1998birkhoff} has studied
  universal holomorphic functions on Riemann surfaces, and
	his approach can be combined with Theorem \ref{thm:u1} to investigate
        universal locally univalent holomorphic functions on Riemann surfaces.
\end{remark}

We denote by $\mathcal{B}(\Omega)$ the set of all $f \in \hol(\Omega)$ such
that $|f(z)| \le 1$ on $\Omega$, and write
$ \mathcal{B}_{l.u.}(\Omega):=\mathcal{B}(\Omega) \cap \hol_{l.u.}(\Omega)$
for the set of bounded locally univalent functions.

\begin{thm}[Universal bounded locally univalent functions] \label{thm:u2}
Let $\Omega$ be a simply connected proper subdomain of $\C$.  
Then there is a function $f \in \mathcal{B}_{l.u.}(\Omega)$ such that
$\{ f \circ \phi \, : \, \phi \in \aut{\Omega}\}$ is dense in
$\mathcal{B}_{l.u.}(\Omega)$.
\end{thm}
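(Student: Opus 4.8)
The plan is to reduce everything to the unit disc and then run a Baire category / transitivity argument, with the boundedness constraint handled by a controlled primitive construction refining Heins. First, since $\Omega$ is a simply connected proper subdomain of $\C$, the Riemann mapping theorem supplies a conformal map $\psi\colon\Omega\to\D$. The map $f\mapsto f\circ\psi^{-1}$ is a homeomorphism of $\mathcal{B}_{l.u.}(\Omega)$ onto $\mathcal{B}_{l.u.}(\D)$ for the topology of locally uniform convergence (both local univalence and the bound $|f|\le 1$ are preserved), and it conjugates the $\aut{\Omega}$-action to the $\aut{\D}$-action via $\phi\mapsto\psi\circ\phi\circ\psi^{-1}$. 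Hence it suffices to produce a universal element of $\mathcal{B}_{l.u.}(\D)$.

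Next I would set up the category framework. Write $T_\phi(f)=f\circ\phi$ for $\phi\in\aut{\D}$; each $T_\phi$ maps $\mathcal{B}_{l.u.}(\D)$ continuously into itself. Any $f\in\mathcal{B}_{l.u.}(\D)$ is nonconstant, so by the open mapping theorem $f(\D)\subseteq\D$, i.e. $|f|<1$ on $\D$; in particular $\max_K|f|<1$ on every compact $K\subset\D$. The space $\mathcal{B}_{l.u.}(\D)$ is a $G_\delta$ subset of the complete, separable metric space $\mathcal{B}(\D)$, since it equals $\bigcap_n\{f:\ f'\neq 0 \text{ on } K_n\}$ for a compact exhaustion $(K_n)$, each such set being open; hence it is a separable Baire space. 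By the Birkhoff transitivity theorem it then suffices to prove topological transitivity: for all nonempty open $U,V\subseteq\mathcal{B}_{l.u.}(\D)$ there is a $\phi\in\aut{\D}$ with $T_\phi(U)\cap V\neq\emptyset$. The universal functions will then form a dense $G_\delta$.

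Concretely, transitivity amounts to the following interpolation lemma: given $f,g\in\mathcal{B}_{l.u.}(\D)$, compact sets $K,L\subset\D$ and $\varepsilon>0$, find $\phi\in\aut{\D}$ and $h\in\mathcal{B}_{l.u.}(\D)$ with $\sup_K|h-f|<\varepsilon$ and $\sup_{\phi(L)}|h-g\circ\phi^{-1}|<\varepsilon$. Using that $\aut{\D}$ is noncompact, I would first choose $\phi$ so that $\phi(L)$ lies in a small region close to $\partial\D$, disjoint from $K$, with both $K$ and $\phi(L)$ sitting inside $\{|z|\le r\}$ for some $r<1$ and with $\D\setminus E$ having no relatively compact components in $\D$, where $E:=K\cup\phi(L)$. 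Putting $u:=f$ near $K$ and $u:=g\circ\phi^{-1}$ near $\phi(L)$, we have $u\in\hol_{l.u.}(E)$ with $|u|<1$ on $E$, and Theorem \ref{thm:r1} already yields a locally univalent $H\in\hol_{l.u.}(\D)$ with $H\approx u$ on $E$, matching the prescribed values on both pieces.

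The hard part is the global bound: the $H$ furnished by Theorem \ref{thm:r1} need not satisfy $|H|\le 1$ off $E$, and this is precisely the refinement of Heins' theorem that the statement demands. To achieve it I would realize $h$ directly through its logarithmic derivative, $h(z)=h(p)+\int_p^z e^{V(\zeta)}\,d\zeta$ with $V$ holomorphic on $\D$, which renders $h$ automatically locally univalent. I would design $V$ so that $e^{V}\approx f'$ on a neighborhood of $K$ and $e^{V}\approx (g\circ\phi^{-1})'$ on a neighborhood of $\phi(L)$, while $\operatorname{Re}V$ is forced to be very negative, hence $e^{V}$ uniformly small, on the rest of $\D$; the existence of such a $V$ is a classical Runge/Mergelyan approximation, in the same spirit as the proof of Theorem \ref{thm:r1}. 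The only quantity left to reconcile is the additive constant by which the frozen plateau value leaving the $K$-piece differs from the required entry value on the $\phi(L)$-piece: I would absorb this mismatch by letting the integral over a thin connecting tube move $h$ along the straight segment between the two relevant points of $\D$, which stays inside $\D$. With $e^{V}$ small elsewhere, a direct estimate of $|h(z)|=|\int_p^z e^{V}|$ then keeps every value of $h$ inside $\D$, so $h\in\mathcal{B}_{l.u.}(\D)$ with the required approximations. Controlling this construction so that the global supremum norm stays below $1$ while the prescribed values on both pieces of $E$ are hit simultaneously is the principal technical obstacle; the surrounding transitivity and Baire argument is then routine.
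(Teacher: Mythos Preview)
Your reduction to $\D$ and the Baire/transitivity setup are fine, but the core construction has a genuine gap. You propose $h(z)=h(p)+\int_p^z e^{V}$ with $\operatorname{Re}V$ very negative, hence $h'=e^V$ tiny, outside neighborhoods of $K$, $\phi(L)$ and a connecting tube. This forces $h$ to be essentially constant on the large outer region $\D\setminus(\text{nbhd of }K\cup\phi(L)\cup\text{tube})$. But that region borders $\partial K$ along a whole arc, while on $K$ you demand $h\approx f$. Concretely, pick $k_1,k_2\in\partial K$ and a fixed exterior point $z$ reachable from both through the outer region; integrating $e^V$ along the two paths from $p\in K$ to $z$ that exit $K$ at $k_1$ resp.\ $k_2$ and using path--independence gives $f(k_1)\approx h(z)\approx f(k_2)$. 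Thus your scheme forces $f$ to be approximately constant on $\partial K$, which fails for generic $f\in\mathcal{B}_{l.u.}(\D)$. The ``plateau plus thin tube'' picture works for hitting finitely many prescribed values, but not for matching a varying function on an entire compact set while keeping $|h|<1$ globally; the obstacle you flag as ``the principal technical obstacle'' is in fact an obstruction.

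The paper avoids transitivity altogether. It starts from Heins' universal Blaschke product $G\in\mathcal{B}(\D)$, lets $\Omega_G\subset\D$ be its non--critical locus, and takes a universal cover $\Psi\colon\D\to\Omega_G$. Then $F:=G\circ\Psi$ lies automatically in $\mathcal{B}_{l.u.}(\D)$ (boundedness is inherited from $G$, local univalence from $G|_{\Omega_G}$ and $\Psi$). Universality of $F$ is deduced from that of $G$ via Hejhal's theorem on convergence of normalized universal covers under kernel convergence: if $G\circ\alpha_n\to f$ with $f$ locally univalent, then the non--critical sets $\Omega_{G\circ\alpha_n}$ kernel--converge to $\D$, so $\Psi_{G\circ\alpha_n}\to\id$; since $\alpha_n(\Omega_{G\circ\alpha_n})=\Omega_G$, one has $\alpha_n\circ\Psi_{G\circ\alpha_n}=\Psi_G\circ\phi_n$ for suitable $\phi_n\in\aut{\D}$, whence $F\circ\phi_n\to f$. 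Note that this argument yields universality only for the full group $\aut{\D}$, not for a prescribed run--away sequence; the paper explicitly leaves that stronger statement open (Problem~\ref{pro:r2}), which is further evidence that a direct transitivity argument of your type is not currently available here.
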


\hide{\sout{Theorem \ref{thm:u2} is in the spirit of the 
 universality results of Heins
and Pommerenke mentioned above which are concerned with universal \textit{bounded}
resp.~\textit{univalent} functions.}}
Theorem \ref{thm:u2} is in the spirit of the 
 universality results of Heins
and Pommerenke mentioned above. However, Heins \cite{heins1954universal} considers only bounded, but not
necessarily \textit{locally univalent} functions, and Pommerenke \cite{pommerenke1964linear} is concerned with
univalent functions, which are not necessarily \textit{bounded}.
We note that while the proof of Theorem \ref{thm:u1}  is based on the
  Runge--type Theorem \ref{thm:r1},  the proof of
  Theorem \ref{thm:u2} in Section 2.3 below
is considerably different and will be based  on Heins' universality result and
the use of universal covering maps, see Theorem  \ref{thm:lu_universal_functions}.

\medskip
Theorem \ref{thm:u1} and  Theorem \ref{thm:u2} put us in a position to investigate
universal conformal metrics with constant curvature.
Recall (see Ahlfors \cite[\S 1.5]{ahlfors2010} or Simon \cite[Chapter 12]{simon2015})  that
a  regular conformal metric $\lambda(z) \, |dz|$ on a domain $\Omega$ is given by  a positive
$C^2$--function $\lambda$ on $\Omega$, called the density of the metric. 
Denoting, as usual,  by $\Delta:= \frac{\partial^2}{\partial
    x^2}+\frac{\partial^2}{\partial y^2}$ the Laplace operator  in the standard Cartesian coordinates of the $xy$--plane,
the
Gauss curvature of such a metric $\lambda(z) \, |dz|$, 
\begin{equation} \label{eq:curv}
 \kappa_{\lambda}(z):=-\frac{\Delta \log \lambda(z)}{\lambda(z)^2} \, ,
\qquad z\in \Omega \, ,\end{equation}
 has an important invariance property: If we define for 
a locally univalent self--map $\phi$ of $\Omega$ the pullback
$\phi^*\lambda(z)\, |dz|$ of
$\lambda(z) \, |dz|$ via $\phi$ by
$$\phi^*\lambda(z):=(\lambda \circ \phi)(z) \, |\phi'(z)| 
\, , \qquad z \in \Omega \, , $$
then
$$ \kappa_{\phi^*\lambda}=\kappa_{\lambda}\circ \phi \, .$$
Hence, for conformal metrics $\lambda(z) \, |dz|$ it is  more natural to consider the pullback
$\phi^*\lambda$ instead of the composition $\lambda \circ \phi$, even though
the additional ``conformal factor'' $|\phi' (z)|$ causes some difficulties.

\medskip
We denote by $\Lambda_c(\Omega)$ the set (of densities) of all regular conformal metrics with
constant curvature $c \in \R$. Then $\Lambda_c(\Omega)$ is a subset of the
Fr\'echet space $C(\Omega)$ which is invariant under pullback in the 
sense that
$\phi^*\lambda \in \Lambda_c$  for all $\lambda \in\Lambda_c$ and all locally
univalent self--maps $\phi$ of $\Omega$. We can now state the 
following theorem, which is maybe the  main result of this paper.

\begin{thm}[Universal constantly curved conformal metrics] \label{thm:u3}
Let  $\Omega$ be a simply connected domain in $\C$ and let $c \in \R$.
Suppose that $\Omega \not=\C$ if $c<0$. 
Then there is an   $\Lambda \in \Lambda_c(\Omega)$ such
  that $\{\phi^*\Lambda \, : \, \phi \in \aut{\Omega}\}$ is dense in
  $\Lambda_{c}(\Omega)$. 
\end{thm}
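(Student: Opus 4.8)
The plan is to deduce Theorem~\ref{thm:u3} from the universality results for locally univalent functions already at our disposal (Theorems~\ref{thm:u1}, \ref{thm:u2} and the meromorphic version~\ref{thm:mer_loc_univ_birk}) by exploiting the classical \emph{developing map} representation of constantly curved metrics. For each sign of $c$ I would introduce a representation map $\Phi$ carrying a class $F$ of locally univalent functions onto $\Lambda_c(\Omega)$. For $c=0$ take $F=\hol_{l.u.}(\Omega)$ and $\Phi(f)=|f'|$; for $c<0$ take $F=\mathcal{B}_{l.u.}(\Omega)$ and $\Phi(f)=\frac{2}{\sqrt{-c}}\,\frac{|f'|}{1-|f|^2}$ (a nonconstant $f$ with $|f|\le 1$ satisfies $|f|<1$ by the maximum principle, so this density is a genuine positive $C^2$ function); for $c>0$ take $F$ to be the space of locally univalent meromorphic functions on $\Omega$ and $\Phi(f)=\frac{2}{\sqrt{c}}\,\frac{|f'|}{1+|f|^2}$. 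A direct computation with \eqref{eq:curv}, using that the Euclidean, spherical and hyperbolic model densities $1$, $\frac{2}{1+|w|^2}$ and $\frac{2}{1-|w|^2}$ have constant curvatures $0$, $+1$ and $-1$, shows that $\Phi(f)\in\Lambda_c(\Omega)$ in each case.

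I would then establish three properties of $\Phi$. \emph{Equivariance:} the chain rule gives $\Phi(f\circ\phi)(z)=\Phi(f)(\phi(z))\,|\phi'(z)|$, that is, $\Phi(f\circ\phi)=\phi^*\Phi(f)$ for every locally univalent self-map $\phi$ of $\Omega$; this is precisely why the metric side is governed by the pullback rather than by composition. \emph{Continuity:} if $f_n\to f$ in the topology of $F$, then $\Phi(f_n)\to\Phi(f)$ locally uniformly on $\Omega$. For $c\le 0$ this follows at once from $f_n'\to f'$ locally uniformly, together with the uniform bound $\sup_K|f|<1$ on compacta in the hyperbolic case; for $c>0$ it follows because spherical locally uniform convergence forces $f_n'\to f'$ near finite values of $f$ and $(1/f_n)'\to(1/f)'$ near poles, whence the spherical derivatives converge locally uniformly. \emph{Surjectivity:} every $\mu\in\Lambda_c(\Omega)$ equals $\Phi(f)$ for some $f\in F$. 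This is the statement that a metric of constant curvature on a simply connected domain carries a global developing map into the appropriate model $(\C,|dw|)$, $(\D,\text{hyperbolic})$ or $(\hat\C,\text{spherical})$; simple connectivity makes the developing map single-valued, and for $c<0$ it lands in $\D$, which is where the hypothesis $\Omega\neq\C$ enters and guarantees that $\Omega$ is a proper subdomain, as required by Theorem~\ref{thm:u2}.

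With these properties in hand the conclusion is soft. Choose a universal $f\in F$: for $c=0$ by Theorem~\ref{thm:u1}, for $c<0$ by Theorem~\ref{thm:u2}, and for $c>0$ by Theorem~\ref{thm:mer_loc_univ_birk}, so that $D:=\{f\circ\phi:\phi\in\aut{\Omega}\}$ is dense in $F$. Put $\Lambda:=\Phi(f)\in\Lambda_c(\Omega)$. By equivariance $\Phi(D)=\{\phi^*\Lambda:\phi\in\aut{\Omega}\}$, and continuity together with surjectivity yields $\Lambda_c(\Omega)=\Phi(F)=\Phi\big(\overline{D}\big)\subseteq\overline{\Phi(D)}$, the first closure taken in $F$ and the second in $C(\Omega)$. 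Hence $\{\phi^*\Lambda:\phi\in\aut{\Omega}\}$ is dense in $\Lambda_c(\Omega)$, which proves the theorem.

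The step I expect to be the main obstacle is the surjectivity of $\Phi$: for an arbitrary $\mu\in\Lambda_c(\Omega)$ one must produce a global developing map in the correct class and verify that it is genuinely locally univalent and, for $c<0$, $\D$-valued. This is classical Liouville theory, but it has to be set up so that the resulting function lands in exactly the class appearing in each universality theorem. The continuity of $\Phi$ in the meromorphic case $c>0$ is the only other point that needs a short argument, since spherical convergence does not \emph{a priori} control derivatives; passing to $1/f$ near the poles resolves this.
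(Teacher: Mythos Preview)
Your proposal is correct and follows essentially the same route as the paper. The paper normalizes to $c\in\{-1,0,+1\}$, states Liouville's representation theorem (your ``surjectivity'' of $\Phi$), and then proves a short proposition to the effect that if $f$ is $\aut{\Omega}$--universal in the relevant class of locally univalent maps into $D_c$, then $f^*\lambda_{D_c}$ is universal in $\Lambda_c(\Omega)$; this proposition is exactly your equivariance-plus-continuity argument, and the three cases are dispatched via Theorems~\ref{thm:u1}, \ref{thm:mer_loc_univ_birk} and \ref{thm:u2} just as you suggest.
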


Hence there exist \textit{universal} constantly curved conformal metrics on any simply
connected domain $\not=\C$.  Theorem \ref{thm:u3} is perhaps the first universality
result for conformal metrics, except possibly for the case of constant curvature $c=0$
which is closely related to universal harmonic functions, see Theorem \ref{thm:harm_univ}
below for more details.

\medskip
We wish to point out that
  proving universality results for conformal metrics or -- and this amounts in
  view of (\ref{eq:curv}) to
  the same thing -- solutions of the Gauss curvature equation $\Delta u=-c\, 
  e^{2u}$, a basic nonlinear elliptic PDE in conformal geometry,  has been our main initial motivation for proving universality results for
  \textit{locally univalent} functions.

\medskip

The present paper is organized as follows. In Section 2 we discuss in greater
generality various Runge--type theorems and universality results for locally univalent holomorphic and
meromorphic functions as well as for constantly curved conformal
metrics. There are many related open problems  and we explicitly
discuss a considerable number of them. The proofs of the results are deferred to the final
Section 3.

\medskip

\textbf{Acknowldegements.} We wish to thank the referees
  for a very careful  
reading of the original manuscript and their many thoughtful suggestions which considerably
helped to improve the quality of this paper.


 \section{Results and open problems}
 
 In what follows $\Omega$ is always a domain in $\C$ and $\M(\Omega)$  the set of all meromorphic functions on $\Omega$.
 We think of $\M(\Omega)$ as a metric space equipped with the
 (metrizable)  topology of locally uniform convergence
 w.r.t.~the chordal metric $\chi$ on the Riemann sphere
 $\hat{\C}=\C\cup\{\infty\}$ where $\chi$ is defined as usual   by  
$$\chi(z_1,z_2):=\frac{|z_1-z_2|}{\sqrt{1+|z_1|^2} \sqrt{1+|z_2|^2}} \, , \quad
\text{ if } z_1,z_2 \in \C \, , $$
and $$ \chi(z_1,\infty):=\chi(\infty,z_1):=\frac{1}{\sqrt{1+|z_1|^2}} \, , \qquad
  \text{ if } z_1 \in \C \, .$$

For $f_n,f\in\M(\Omega)$ we write $f_n \to f$ locally $\chi$--uniformly
on $\Omega$ if $\chi(f_n,f) \to 0$
 locally uniformly on $\Omega$. 
 %

\medskip

 A function $f\in\M(\Omega)$ is called \textit{locally univalent} if $f$ has
 at most simple poles
 and $f^{\prime}(z)\neq0$ for all $z\in\Omega$ with $f(z)\neq\infty$.
For a family $\mathcal{G}\subseteq\M(\Omega)$ we let
$$\mathcal{G}_{l.u.}\coloneqq\{f\in\mathcal{G}\,:\,f\text{ is locally
  univalent}\} \, .$$
We will be mainly interested in the families
\begin{itemize}
\item[(i)]
$ \mathcal{G}=\hol(\Omega)$, and
\item[(ii)]
  $\mathcal{G}=\mathcal{\B}(\Omega)\coloneqq\left\{f\in\hol(\Omega)\,:\,\sup_{z\in\Omega} |f(z)|\leq 1\right\}
$.
\end{itemize}

\subsection{Runge--type theorems for locally univalent funcions}

For a compact set $K$ of $\C$ we denote by $\M_{l.u.}(K)$ the set of all
locally univalent meromorphic functions on (the components of) some open
neighborhood of $K$.

\begin{thm}[Runge--type theorem for locally univalent functions] \label{thm:r2}\label{thm:loc_univ_mer_runge}\label{thm:loc_univ_hol_runge}
Let $\Omega$ be a domain in $\C$ and let $K$ be a compact set in $\Omega$ such
that $\Omega \setminus K$ has no relatively compact components in
$\Omega$. Then
\begin{itemize}
\item[(a)] every $f \in \hol_{l.u.}(K)$ can be approximated uniformly on $K$ by
  functions in $\hol_{l.u.}(\Omega)$;
\item[(b)] every $f \in \M_{l.u.}(K)$ can be approximated $\chi$--uniformly on $K$ by
  functions in $\M_{l.u.}(\C)$, provided that $\C
  \setminus K$ is connected.
\end{itemize}
\end{thm}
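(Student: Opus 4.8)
The plan is to deduce both statements from a Runge theorem for nowhere-vanishing holomorphic $1$-forms with control on periods, in the spirit of Gunning--Narasimhan, Forstneri\v{c} and Majcen. For part (a), I start from $f \in \hol_{l.u.}(K)$, so that $f'$ is zero-free on an open neighborhood $U \supseteq K$ and $f'\,dz$ is a nowhere-vanishing holomorphic $1$-form on $U$; since $f$ is single-valued on $U$, all periods $\oint_\gamma f'\,dz$ vanish for cycles $\gamma \subset U$. The hypothesis that $\Omega\setminus K$ has no relatively compact components in $\Omega$ is exactly the classical Runge condition, and I would first record its homological consequence: shrinking $U$ if necessary, every homology class of $\Omega$ is represented by a cycle contained in $U$. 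It then suffices to produce a nowhere-vanishing $G \in \hol(\Omega)$ with $G$ close to $f'$ uniformly on $K$ and $\oint_\gamma G\,dz = 0$ for every $\gamma$ in a homology basis of $\Omega$ (which may be taken in $U$); integrating $G$ from a base point in each component of a neighborhood of $K$ and matching the value of $f$ there yields $g \in \hol_{l.u.}(\Omega)$ with $g$ close to $f$ on $K$.

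The core step is therefore to approximate the zero-free function $f'$ on $K$ by a zero-free $G$ on $\Omega$ with prescribed (vanishing) periods. I would first apply classical Runge to approximate the logarithmic derivative $f''/f' \in \hol(U)$ by some $\rho \in \hol(\Omega)$; formally $G = \exp\!\big(\int \rho\big)$ is zero-free and close to $f'$, but $\int \rho$, and hence $G$ together with its primitive, need not be single-valued. The two resulting obstructions---single-valuedness and zero-freeness of $G$ on the one hand, and vanishing of the periods of $G\,dz$ on the other---I would resolve by the standard correction scheme: approximate freely, then correct the finitely many offending periods using that the period map is surjective on a suitable finite-dimensional space of admissible perturbations (exact $1$-forms supported away from $K$, together with multiplicative corrections by exponentials). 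The main obstacle is precisely this simultaneous reconciliation: each adjustment of a period can reintroduce zeros of $G$, and each removal of a zero can shift the periods, so the two corrections must be carried out together by an iteration (or an implicit-function/transversality argument) that keeps $G$ zero-free while driving the periods to $0$. This is the technical heart inherited from the noncritical-immersion and $1$-form Runge theorems.

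For part (b) I would reduce to part (a) by viewing $f \in \M_{l.u.}(K)$ as an immersion into $\hat{\C}$ and working in the two standard charts $w$ and $1/w$. On a compact neighborhood of $K$ avoiding the finitely many simple poles $p_1,\dots,p_m$, the function $f$ is holomorphic and locally univalent, so part (a) applies directly; near each $p_j$ the function $1/f$ is holomorphic and locally univalent (a simple pole of $f$ is a simple zero of $1/f$ with $(1/f)' \neq 0$, while $(1/f)' = -f'/f^2 \neq 0$ off the pole), so part (a) applies in the chart at $\infty$. Using that $\C$ is simply connected---so no period obstruction arises and classical Runge on the plane is available because $\C\setminus K$ is connected---I would glue these local locally univalent approximants into a single locally univalent meromorphic function on $\C$ by a Mittag--Leffler/Runge patching, measuring everything in the chordal metric $\chi$ so that the two charts are treated symmetrically.

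The delicate point in (b) is that the gluing must preserve local univalence across the chart transitions: on the overlaps near each pole the two approximants are related by $w \mapsto 1/w$, and patching the holomorphic data there can create critical points or spurious poles. I would control this exactly as in (a), performing the patching at the level of the chart-dependent derivative and correcting the resulting zeros via the exponential, so that the global function has nowhere-vanishing spherical derivative and only simple poles. Since $\C$ is simply connected there are no periods to match, which makes the case $\Omega=\C$ of (b) cleaner than the general situation of (a); the chordal estimates then upgrade the chartwise uniform approximations to a single $\chi$-uniform approximation on $K$ by a function in $\M_{l.u.}(\C)$.
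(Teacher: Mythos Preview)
Your approach to part (a) is essentially the paper's: reduce to approximating the zero-free $1$-form $f'\,dz$ by a zero-free exact $1$-form on $\Omega$, using a period-correction scheme of Gunning--Narasimhan/Majcen type (multiplicative perturbations by exponentials, with an implicit-function argument to kill finitely many periods at a time, iterated along an exhaustion). The paper organizes this slightly differently---it invokes a zero-free Runge theorem directly rather than going through the logarithmic derivative---but the content is the same.

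Your plan for part (b), however, has a genuine gap, and the paper takes a completely different route. The gluing you describe does not work as stated: once you have a locally univalent holomorphic approximant $g$ on $K$ away from the poles and, separately, locally univalent approximants $1/h_j$ near each pole $p_j$, you must splice them into a \emph{single} element of $\M_{l.u.}(\C)$. Being $\chi$-close on the overlap annuli is not enough; Mittag--Leffler/Runge patching is additive and does not preserve local univalence, and ``patching at the level of the chart-dependent derivative'' is not well-defined, since $f'$ and $(1/f)' = -f'/f^2$ are related by a nonlinear, $f$-dependent factor across the chart change. Nothing in your scheme prevents the correction terms from producing critical points or higher-order poles.

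The paper resolves this by passing to the one quantity that \emph{is} invariant under postcomposition with M\"obius maps: the Schwarzian derivative $S_f$, which is \emph{holomorphic} on a neighborhood of $K$ precisely because $f$ is locally univalent meromorphic. Since $\C\setminus K$ is connected, classical Runge yields polynomials $p_n \to S_f$ uniformly near $K$. One then solves the linear ODE $w'' + \tfrac12 p_n\, w = 0$ on all of $\C$ with initial data matching two independent solutions $u_1,u_2$ of $w'' + \tfrac12 S_f\, w = 0$ (so that $f = u_1/u_2$), obtaining entire functions $v_n,w_n$; a Gronwall/normal-families argument gives $v_n \to u_1$ and $w_n \to u_2$ locally uniformly, the Wronskians converge so $v_n,w_n$ are eventually independent, and $g_n := v_n/w_n \in \M_{l.u.}(\C)$ converges to $f$ $\chi$-uniformly on $K$. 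The Schwarzian is exactly the ``chart-independent derivative data'' your sketch was reaching for but did not identify.
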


Note that (b) is a somewhat weaker statement than (a). In fact, we do not 
 know if the analogue of (a) holds for meromorphic functions:

\begin{problem} \label{pro:r1}
Let $\Omega$ be a domain in $\C$ and let $K$ be a compact set in $\Omega$ such
that $\Omega \setminus K$ has no relatively compact components in
$\Omega$. Can every $f \in \M_{l.u.}(K)$ be approximated $\chi$--uniformly on $K$ by
  functions in $\M_{l.u.}(\Omega)$\,?
\end{problem}

The following example shows that we have to assume that $\Omega \setminus K$ has no relatively compact components in
$\Omega$ in Problem \ref{pro:r1}  and also that  $\C
  \setminus K$ is connected in Theorem \ref{thm:r2} (b). 
We employ the well--known fact that  a function $f\in\M(\Omega)$ is locally univalent if and only if its Schwarzian derivative
\begin{equation*}
	S_f\coloneqq \left(\frac{f^{\prime\prime}}{f^\prime}\right)^{\prime}-\frac{1}{2}\left(\frac{f^{\prime\prime}}{f^{\prime}}\right)^2
\end{equation*}
 is holomorphic on $\Omega$.

\begin{example}\label{ex:hol_convex}
Let $K:=\{z \in  \C \, :\, 1/2 \le |z| \le 2\}$ and 
$f(z):=-1/z^2 \in \M_{l.u.}(K)$. 	Suppose that there is a sequence
$(g_n)$ in $\mathcal{M}_{l.u.}(\C)$ which converges to $f$ $\chi$--uniformly on $K$.
	Then we have  $S_{g_n}\rightarrow S_f$ uniformly on $\partial\D$.
	But since $S_{g_n}\in\mathcal{H}(\C)$ for all $n\in\N$ the maximum principle implies $S_{g_n}\rightarrow h$ uniformly in $\D$ for a function $h\in\mathcal{H}(\D)$.
	We have $S_f\equiv h$ on $\D\cap K$ and hence on $\D\setminus\{0\}$.
	This, however, contradicts the fact that $0$ is a critical point of $f$.
\end{example}

Since Theorem \ref{thm:r2} (a) is a form of the classical Runge theorem in
which one allows only locally univalent functions,
it is tempting to ask if 
there are analogues of Mergelyan's approximation Theorem \cite{mergelyan1952uniform} and Arakelyan's Theorem \cite{arakelyan1968uniform} for locally univalent functions:
\begin{problem}\label{prob:lu_merg}
    Let $K$ be a compact set in $\C$  with connected complement.
    Suppose $f\colon K\rightarrow\C$ is continuous on $K$ and locally
    univalent in the interior $K^{\circ}$ of $K$.
    Can $f$ be approximated by entire locally univalent functions?    
    What if $K$ is only closed but unbounded and in addition $\hat{\C}\setminus K$ is locally connected at $\infty$?
\end{problem}
Note that we allow $f$ to have \enquote{critical points} on the boundary $\partial K$.
Recently, Andersson \cite{andersson2013mergelyan} has posed a similar problem
about  zero--free approximation.

\subsection{Universal locally univalent functions}

\begin{definition}
Let $\Omega$ be a domain in $\C$, $\mathcal{G} \subseteq \M(\Omega)$ and
$\Phi$ a family of holomorphic self--maps of $\Omega$.
 A function $G\in\mathcal{G}$ is called $\Phi$--universal in $\mathcal{G}$
 if 
$\{ G \circ \phi \, : \, \phi \in \Phi\}$ is dense in $\mathcal{G}$.
If $G\in\mathcal{G}$ is $\aut{\Omega}$--universal in  $\mathcal{G}$, we simply
call $G$ universal in $\mathcal{G}$.
\end{definition}

 Note that a $\Phi$--universal function in $\mathcal{G}$ is always supposed to
 belong to $\mathcal{G}$.

\medskip

The aim of this section is to provide necessary and also sufficient conditions
for the existence of $\Phi$--universal functions for families of 
locally univalent holomorphic or meromorphic functions on a domain $\Omega$ in $\C$.
For this purpose, the following concepts, which have  been introduced in   \cite{bernal1995universal}
and \cite{erdmann2009universal}, will play a crucial role.

\begin{definition}
	Let $\Omega$ be a domain in $\C$ and let $(\phi_n)$ be a sequence of holomorphic self--maps of $\Omega$.
	\begin{enumerate}[(i)]
		\item
		We say that $(\phi_n)$ is run--away, if for every compact set $K\subseteq \Omega$ there exists $n\in\N$ with $\phi_n(K)\cap K= \emptyset$.
		\item 
		We say that $(\phi_n)$ is eventually injective, if for every compact set $K\subseteq\Omega$ there exists $N\in\N$ such that the restriction $\phi_n|_K$ is injective for all $n\geq N$.
	\end{enumerate}
\end{definition}

These conditions turn out to be necessary for the existence of
$\Phi$--universal functions in $\hol_{l.u.}(\Omega)$:
\begin{prop}\label{prop:run-away}
Let $\Omega$ be a domain in $\C$ and let $\Phi$ be a family of locally
univalent self--maps of $\Omega$.
	Suppose that there is a $\Phi$--universal function in $\hol_{l.u.}(\Omega)$. 
	Then $\Phi$ contains a run--away and eventually injective sequence.
\end{prop}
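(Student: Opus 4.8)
The plan is to fix a normal exhaustion $K_1 \subseteq \mathrm{int}(K_2) \subseteq K_2 \subseteq \mathrm{int}(K_3) \subseteq \dots$ of $\Omega$ by compact sets with $\bigcup_{m} K_m = \Omega$, and to build the desired sequence by producing, for each $m$, a single map $\phi_m \in \Phi$ satisfying the two level-$m$ requirements $\phi_m(K_m)\cap K_m=\emptyset$ and $\phi_m|_{K_m}$ injective. These already yield everything: given a compact $K\subseteq\Omega$, pick $m_0$ with $K\subseteq K_{m_0}$; then $\phi_{m_0}(K)\cap K\subseteq\phi_{m_0}(K_{m_0})\cap K_{m_0}=\emptyset$, so $(\phi_m)$ is run--away, while for every $m\ge m_0$ the map $\phi_m|_K$ is a restriction of the injective map $\phi_m|_{K_m}$ and hence injective, so $(\phi_m)$ is eventually injective. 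Thus the proposition reduces to the construction at one arbitrary level $m$.

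Let $G\in\hol_{l.u.}(\Omega)$ be the given $\Phi$--universal function, so that $\{G\circ\phi:\phi\in\Phi\}$ is dense in $\hol_{l.u.}(\Omega)$, i.e.\ can approximate any target uniformly on compacta. I would drive the construction with two elementary implications. Since $\phi(z_1)=\phi(z_2)$ forces $(G\circ\phi)(z_1)=(G\circ\phi)(z_2)$, injectivity of $G\circ\phi$ on $K_m$ implies injectivity of $\phi$ on $K_m$; and since $\phi(z)\in K_m$ forces $(G\circ\phi)(z)\in G(K_m)$, it suffices for the run--away property to make $(G\circ\phi_m)(K_m)$ disjoint from the compact set $G(K_m)$. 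I would meet both goals with one target: the translate $f_m(z):=z+c_m$, which lies in $\hol_{l.u.}(\Omega)$ (it is entire and globally injective), with $|c_m|$ chosen so large that $\dist(K_m+c_m,\,G(K_m))>1$, which is possible since $K_m$ and $G(K_m)$ are bounded. By density there is $\phi_m\in\Phi$ with $G\circ\phi_m$ arbitrarily close to $f_m$, uniformly on a compact neighbourhood $K_m''\subseteq\Omega$ of $K_m$.

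The run--away conclusion is then immediate: if $\sup_{K_m}|G\circ\phi_m-f_m|<1$, then for each $z\in K_m$ the value $(G\circ\phi_m)(z)$ lies at distance $<1$ from $f_m(z)\in K_m+c_m$ and hence, by the choice of $c_m$, off $G(K_m)$; this forces $\phi_m(z)\notin K_m$, i.e.\ $\phi_m(K_m)\cap K_m=\emptyset$. The delicate step, which I expect to be the main obstacle, is transferring injectivity, because a merely uniform limit of injective functions need not be injective, so uniform closeness of $G\circ\phi_m$ to the univalent $f_m$ does not by itself suffice. Here I would first upgrade to $C^1$--closeness on a compact set $K_m'$ with $K_m\subseteq\mathrm{int}(K_m')\subseteq K_m'\subseteq\mathrm{int}(K_m'')$ via Cauchy's estimates, and then establish global injectivity of $G\circ\phi_m$ on $K_m$ by a case split keyed to $f_m'\equiv1$: for pairs of nearby points I would use that $\mathrm{Re}\,(G\circ\phi_m)'$ stays close to $\mathrm{Re}\,f_m'=1>0$, so that $G\circ\phi_m$ is injective on each small disc contained in $K_m'$ (Noshiro--Warschawski); for pairs of points separated by a fixed positive distance I would use that the difference quotient of $G\circ\phi_m$ stays close to that of $f_m$, which is identically $1$ and in particular nonzero. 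Combining the two regimes gives injectivity of $G\circ\phi_m$, hence of $\phi_m$, on $K_m$. Concretely this fixes, for each $m$, a single tolerance $\eta_m>0$ that simultaneously secures the run--away estimate and the two injectivity estimates; the possible non--convexity of $K_m$ is precisely what turns the injectivity transfer into this quantitative two--regime argument rather than a one--line application of Hurwitz's theorem.
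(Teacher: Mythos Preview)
Your proof is correct and follows essentially the same strategy as the paper: approximate a large translate $z\mapsto z+c$ by $G\circ\phi$ and read off both run--away and injectivity from the approximation. Your run--away argument (forcing $(G\circ\phi_m)(K_m)$ to miss $G(K_m)$) is clean and arguably clearer than the paper's version.

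The one place where you work harder than necessary is the injectivity step. Your two--regime split (Noshiro--Warschawski for nearby points, difference quotient for distant ones) is valid, but the non--convexity of $K_m$ is not really an obstacle: the paper handles this in one stroke via Rouch\'e's theorem. Concretely, if $L$ is a compact set with $K\subseteq L^\circ$, $\delta:=\tfrac12\operatorname{dist}(K,\partial L)$, and $\|G\circ\phi-f\|_L<\delta$ with $f(z)=z+c$, then for each fixed $z_0\in K$ and every $z\in\partial L$ one has
\[
\big|[(G\circ\phi)(z_0)-(G\circ\phi)(z)]-[z_0-z]\big|<2\delta\le|z_0-z|,
\]
so by Rouch\'e $(G\circ\phi)(z_0)-(G\circ\phi)(z)$ and $z_0-z$ have the same number of zeros in $L^\circ$, namely one; hence $G\circ\phi$ (and therefore $\phi$) is injective on $K$. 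This replaces your Cauchy--estimate upgrade and case split by a single boundary comparison, and explains why the obstruction you anticipated does not materialize.
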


We next turn to sufficient conditions, but 
restricting the discussion to the cases when $\Omega$ is either
simply connected or of infinite connectivity. The reason for this is the fact
that for domains of finite connectivity $N>1$  there are
$\Phi$--universal functions $f$ for $\hol(\Omega)$
such that the family $\Phi$ of locally univalent self--maps of $\Omega$ is mainly responsible for the denseness of
$\{f \circ \phi \, : \, \phi\in \Phi\}$ in $\hol(\Omega)$ and not $f \in
\hol(\Omega)$, see \cite{erdmann2009universal}.

\medskip
For simply connected domains, we have a complete picture:

\begin{thm}\label{thm:con_univ}
	Let $\Omega$ be a simply connected domain in $\C$. Suppose that
        $\Phi$ is a family of locally univalent self--maps of $\Omega$ which
        contains a run--away and eventually injective sequence.
	Then there is a $\Phi$--universal function in $\hol_{l.u.}(\Omega)$ and
 the set of all $\Phi$--universal
        functions in $\hol_{l.u.}(\Omega)$  is a dense $G_{\delta}$--subset of $\hol_{l.u.}(\Omega)$.
\end{thm}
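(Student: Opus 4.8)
The plan is to deduce the statement from the Runge--type Theorem~\ref{thm:r1} via the standard Baire category (Birkhoff transitivity) machinery, the only genuinely new ingredient being that every approximation has to be carried out inside the class of \emph{locally univalent} functions. First I would check that $X:=\hol_{l.u.}(\Omega)$ is a separable Baire space, so that the universality criterion applies. Since $f\in\hol(\Omega)$ is locally univalent iff $f'$ is zero--free, and since for each member $K_m$ of a compact exhaustion of $\Omega$ the set $\{f:\min_{K_m}|f'|>0\}$ is open in $\hol(\Omega)$, the set $X$ is a $G_\delta$ in the separable Fr\'echet space $\hol(\Omega)$; hence $X$ is Polish and in particular a separable Baire space. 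Moreover, for $\phi\in\Phi$ the composition operator $C_\phi(f):=f\circ\phi$ maps $X$ continuously into itself, because $(f\circ\phi)'=(f'\circ\phi)\cdot\phi'$ is zero--free whenever $f'$ and $\phi'$ are.

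Next, let $(\phi_n)$ be the run--away, eventually injective sequence contained in $\Phi$. By the Birkhoff transitivity theorem it suffices to prove that the sequence $(C_{\phi_n})$ is topologically transitive on $X$: then the set of $f\in X$ for which $\{f\circ\phi_n:n\in\N\}$ is dense is a dense $G_\delta$, and since this set is contained in the set of $\Phi$--universal functions --- itself a $G_\delta$, being $\bigcap_j\bigcup_{\phi\in\Phi}C_\phi^{-1}(W_j)$ for a countable base $(W_j)$ of $X$ --- the latter is a dense $G_\delta$, as claimed. Unravelling the compact--open topology, transitivity amounts to the following approximation claim: given $f_1,f_2\in X$, a compact set $K\subseteq\Omega$ and $\varepsilon>0$, there exist $n\in\N$ and $f\in X$ with $\sup_K|f-f_1|<\varepsilon$ and $\sup_K|f\circ\phi_n-f_2|<\varepsilon$.

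To prove the claim I would first enlarge $K$ to its hull $\hat K$ (the union of $K$ with all components of $\Omega\setminus K$ that are relatively compact in $\Omega$); since $\Omega$ is simply connected this hull is again compact, and $\Omega\setminus\hat K$ has no relatively compact components. Let $N$ be the index furnished by eventual injectivity for $\hat K$; applying the run--away property to the enlargement $\hat K\cup\phi_1(\hat K)\cup\cdots\cup\phi_{N-1}(\hat K)$ produces an index $n\ge N$ with $\phi_n(\hat K)\cap\hat K=\emptyset$, so that $\phi_n$ is injective on $\hat K$ and, by local univalence, on a neighbourhood of $\hat K$; thus $\psi:=\phi_n^{-1}$ is a well--defined locally univalent map on a neighbourhood of $\phi_n(\hat K)$. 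On the disjoint compact set $L:=\hat K\cup\phi_n(\hat K)$ define $h\in\hol_{l.u.}(L)$ by $h:=f_1$ near $\hat K$ and $h:=f_2\circ\psi$ near $\phi_n(\hat K)$. Applying Theorem~\ref{thm:r1} then yields $f\in\hol_{l.u.}(\Omega)$ with $\sup_L|f-h|<\varepsilon$; this gives $\sup_{\hat K}|f-f_1|<\varepsilon$ together with $\sup_{\phi_n(\hat K)}|f-f_2\circ\psi|<\varepsilon$, and since $\psi\circ\phi_n=\id$ on $\hat K$ the latter is exactly $\sup_{\hat K}|f\circ\phi_n-f_2|<\varepsilon$, which proves the claim.

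The step I expect to be the main obstacle is the verification that $L=\hat K\cup\phi_n(\hat K)$ actually satisfies the hypothesis of Theorem~\ref{thm:r1}, i.e.\ that $\Omega\setminus L$ has no relatively compact components. This is where simple connectivity of $\Omega$ and the disjointness of the two pieces must be used: although $\hat K$ is chosen to be Runge--good, it is not a priori clear that its homeomorphic copy $\phi_n(\hat K)$ is, nor that the union of two Runge--good compacta is Runge--good. I would isolate this in a separate topological lemma, to the effect that two disjoint compact subsets of a simply connected domain, each with no relatively compact complementary component, have union with the same property. One can prove this by an Alexander duality / Mayer--Vietoris argument on $\hat\C$, disjointness forcing the relevant reduced cohomology to split as a direct sum over the two pieces; alternatively, staying elementary, one pushes $\phi_n(\hat K)$ close to $\partial\Omega$ via the run--away property and argues directly that the positive distance between the two pieces prevents any complementary region from being trapped. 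Once this lemma is in place, the construction above goes through verbatim and completes the proof.
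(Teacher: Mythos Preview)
Your proposal is correct and follows essentially the same route as the paper: verify that $\hol_{l.u.}(\Omega)$ is a Baire space, reduce to Birkhoff transitivity for the composition operators $C_{\phi_n}$, define $h$ piecewise on $\hat K\cup\phi_n(\hat K)$, and invoke the Runge--type Theorem~\ref{thm:r1}. The paper's proof is exactly this, only more terse.

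The step you single out as the ``main obstacle'' is in fact routine, and the paper disposes of it in one line (``Note that each $L_n$ has connected complement''). For a simply connected $\Omega$, a compact $K\subseteq\Omega$ is $\O$--convex iff $\C\setminus K$ is connected, iff $\check H^1(K)=0$ by Alexander duality in $\hat\C$. Since $\phi_n$ is an injective holomorphic map on a neighbourhood of $\hat K$, the image $\phi_n(\hat K)$ is homeomorphic to $\hat K$, hence $\check H^1(\phi_n(\hat K))=0$; and for the disjoint union $\check H^1(\hat K\cup\phi_n(\hat K))=\check H^1(\hat K)\oplus\check H^1(\phi_n(\hat K))=0$. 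So $\C\setminus L$ is connected and Theorem~\ref{thm:r1} applies. Your Alexander duality suggestion is therefore exactly right, and no elaborate separate lemma is needed. Your trick for forcing $n\ge N$ (apply run--away to $\hat K\cup\phi_1(\hat K)\cup\cdots\cup\phi_{N-1}(\hat K)$) is also correct; the paper glosses over this by silently passing to a suitable subsequence along an exhaustion.
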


In order to discuss the case of domains of infinite connectivity, we recall
that a compact subset $K$ of a domain $\Omega$ in $\C$ is called $\O$--convex if
$\Omega \setminus K$ has no relatively compact components in $\Omega$.

\begin{thm}\label{thm:mult_univ}
	Let $\Omega$ be a domain in $\C$ of infinite connectivity and let
        $\Phi$ be a family of locally univalent self--maps of $\Omega$.
	Suppose that there exists a sequence $(\phi_n)$ in $\Phi$ such that
	\begin{enumerate}[(i)]
		\item $(\phi_{n})$ is eventually injective, and
		\item for every $\O$--convex compact set $K$ in $\Omega$ and every $N\in\N$ there exists $n\geq N$ such that $\phi_{n}(K)\cap K=\emptyset$ and $\phi_{n}(K)\cup K$ is $\O$--convex.
	\end{enumerate}
Then there is a $\Phi$--universal function in $\hol_{l.u.}(\Omega)$ and the set
of all such functions  is a dense $G_{\delta}$--subset of $\hol_{l.u.}(\Omega)$.
\end{thm}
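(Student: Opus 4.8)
The plan is to prove Theorem \ref{thm:mult_univ} by the standard Birkhoff-type universality criterion, reducing the denseness statement to an approximation problem which is then solved by the Runge-type Theorem \ref{thm:r2}(a). The key observation is that $\hol_{l.u.}(\Omega)$ is a Baire space (being a $G_\delta$ subset of the Fréchet space $\hol(\Omega)$, since local univalence is an open condition cut out by the non-vanishing of the derivative), so it suffices to verify a topological transitivity/denseness condition for the family of composition operators $\{C_\phi : \phi \in \Phi\}$, $C_\phi(f) = f \circ \phi$. Concretely, I would show: for every pair of functions $f, g \in \hol_{l.u.}(\Omega)$, every compact $L \subseteq \Omega$, and every $\varepsilon > 0$, there exist $\phi \in \Phi$ and $h \in \hol_{l.u.}(\Omega)$ such that $h$ is $\varepsilon$-close to $f$ on $L$ and $h \circ \phi$ is $\varepsilon$-close to $g$ on $L$. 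Once this is established, a Baire category argument (intersecting, over a countable dense set of target functions $g$ and a countable exhaustion of $\Omega$ by compacta, the open sets of functions whose orbit enters the prescribed neighborhoods) yields that the set of $\Phi$-universal functions is a dense $G_\delta$.

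Let me describe how the approximation step should go, since this is where the hypotheses (i) and (ii) enter. Fix $f, g, L, \varepsilon$ as above; since $L$ is compact I may enlarge it to an $\O$-convex compact set $K$ in $\Omega$ (take the $\O$-convex hull, which is compact because $\Omega \setminus K$ having no relatively compact components in $\Omega$ is exactly $\O$-convexity). By hypothesis (ii) I can pick $\phi = \phi_n$ for large $n$ so that $\phi(K) \cap K = \emptyset$ and $K \cup \phi(K)$ is $\O$-convex; enlarging $n$ further and invoking (i) I may assume in addition that $\phi|_K$ is injective. The idea is now to define a target function on the disjoint compact set $K \cup \phi(K)$ by prescribing it to equal $f$ on $K$ and to equal $g \circ (\phi|_K)^{-1}$ on $\phi(K)$; the latter is well-defined and locally univalent on $\phi(K)$ precisely because $\phi|_K$ is an injective local homeomorphism there and $g$ is locally univalent. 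This piecewise definition gives an element of $\hol_{l.u.}(K \cup \phi(K))$. Applying the locally univalent Runge Theorem \ref{thm:r2}(a) to the $\O$-convex compact set $K \cup \phi(K)$ produces $h \in \hol_{l.u.}(\Omega)$ approximating this target uniformly within $\varepsilon$ on $K \cup \phi(K)$. Then $h \approx f$ on $K \supseteq L$, while on $L \subseteq K$ one has $(h \circ \phi)(z) = h(\phi(z)) \approx g((\phi|_K)^{-1}(\phi(z))) = g(z)$, which is the desired closeness.

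\textbf{The main obstacle} I anticipate is not the category machinery but the interface between the two halves of the approximation, specifically ensuring that the glued target function genuinely lies in $\hol_{l.u.}$ of a neighborhood and that Theorem \ref{thm:r2}(a) applies verbatim. Two points need care. First, Theorem \ref{thm:r2}(a) requires the approximating data to be locally univalent on a \emph{neighborhood} of the compact set; I must check that $g \circ (\phi|_K)^{-1}$ extends locally univalently past $\phi(K)$, which follows because $\phi$ is locally univalent and injective on a neighborhood of $K$ (eventual injectivity on the slightly larger $\O$-convex hull, together with $\phi'(z) \neq 0$), so the inverse branch is holomorphic and locally univalent on a neighborhood of $\phi(K)$. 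Second, I must confirm that $K \cup \phi(K)$ really is $\O$-convex in $\Omega$ so that the hypothesis of Theorem \ref{thm:r2}(a) is met — but this is granted directly by condition (ii). A subtler point is that local univalence is \emph{not} preserved under uniform limits in general (a sequence of locally univalent functions can converge to one with a critical point), so I cannot simply take limits; this is exactly why the statement is phrased as a $G_\delta$ of genuinely universal functions living in $\hol_{l.u.}(\Omega)$, and why the whole argument must be run inside the Baire space $\hol_{l.u.}(\Omega)$ rather than in $\hol(\Omega)$ with an afterthought about critical points. I would conclude by noting that the run-away property, which in condition (ii) is strengthened to the requirement that $\phi_n(K) \cup K$ be $\O$-convex, is precisely what allows the Runge approximation to be carried out on a single $\O$-convex piece, and this is the mechanism replacing the simple-connectivity argument used for Theorem \ref{thm:con_univ}.
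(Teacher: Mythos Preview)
Your proposal is correct and follows essentially the same route as the paper: apply the Birkhoff transitivity criterion (Theorem \ref{thm:univ_crit}) to the composition operators on the Baire space $\hol_{l.u.}(\Omega)$, then verify transitivity by gluing $f$ on $K$ and $g\circ(\phi_n|_K)^{-1}$ on $\phi_n(K)$ and invoking the locally univalent Runge Theorem \ref{thm:r2}(a) on the $\O$--convex set $K\cup\phi_n(K)$. Your justification that $\hol_{l.u.}(\Omega)$ is Baire (as a $G_\delta$ in $\hol(\Omega)$) differs slightly from the paper's (which observes it is open in the closed set $\hol_{l.u.}(\Omega)\cup\{\text{constants}\}$), and you are more explicit about extending $g\circ(\phi_n|_K)^{-1}$ to a neighborhood of $\phi_n(K)$, but these are cosmetic differences.
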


We now take a closer look at the case $\Phi \subseteq \aut{\Omega}$. 
It has been shown by Bernal-Gonz\'ales and Montes-Rodr\'iguez
\cite{bernal1995universal} that if $\Omega$ is
not conformally equivalent to $\C\setminus\{0\}$ then there is a $\Phi$--universal function
 in $\hol(\Omega)$ if and only if $\Phi$ contains a run--away sequence.
This result also holds in the setting of \textit{locally univalent} functions:

\hide{Theorems 3.1 and 3.6 in \cite{bernal1995universal} show that if $\Omega$ is
not conformally equivalent to $\C\setminus\{0\}$ and $(\phi_n)$ is a sequence
in $\aut{\Omega}$, then there is a $(\phi_n)$--universal function
 in $\hol(\Omega)$ if and only of $(\phi_n)$ is run--away.
The key step in the proof of the \enquote{only if part} is to show that such sequences preserve $\O$-convexity in the sense stated in assertion \textit{(ii)} of Theorem \ref{thm:mult_univ} (see \cite{bernal1995universal} Lemma 2.12).
Further note that the existence of a run--away sequence $(\phi_n)\subseteq\aut{\Omega}$ implies that either $\Omega$  simply connected or isomorphic to $\C\setminus\{0\}$ or of infinite connectivity.
This observation allows us to apply Theorem \ref{thm:mult_univ} and
Proposition \ref{prop:run-away} to obtain a corresponding result for locally univalent functions.}
\begin{thm}\label{cor:aut_univ}
	Let $\Omega$ be a  domain in $\C$ which is not conformally equivalent
        to $\C\setminus\{0\}$ and let $(\phi_n)\subseteq\aut{\Omega}$.
	Then there is a $(\phi_n)$--universal function in $\hol_{l.u.}(\Omega)$ if and only if $(\phi_n)$ is run--away.
\end{thm}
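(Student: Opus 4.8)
The plan is to prove the two implications separately, reducing the sufficiency direction to the two structure theorems of this section via the classification of plane domains with non-compact automorphism group. For the necessity direction, I would argue directly from Proposition~\ref{prop:run-away}. Since each automorphism of $\Omega$ is in particular a locally univalent self-map, the set $\Phi := \{\phi_n : n \in \N\}$ satisfies the hypotheses of that proposition, which therefore supplies a run-away sequence among the $\phi_n$. But run-away of a subsequence immediately forces run-away of $(\phi_n)$ itself: for each compact $K \subseteq \Omega$ the index produced by the subsequence is a legitimate index of the full sequence, so $(\phi_n)$ is run-away.

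For the sufficiency direction, suppose $(\phi_n)$ is run-away. The first step is to invoke the structural trichotomy for domains admitting a run-away sequence of automorphisms: such a domain is either simply connected, conformally equivalent to $\C\setminus\{0\}$, or of infinite connectivity (this is part of the classification in \cite{bernal1995universal}). The middle possibility is excluded by hypothesis, leaving exactly two cases, each handled by one of the sufficiency theorems above.

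If $\Omega$ is simply connected, I would apply Theorem~\ref{thm:con_univ} to $\Phi=\{\phi_n\}$. Its hypothesis asks for a run-away and eventually injective sequence in $\Phi$, and the sequence $(\phi_n)$ itself qualifies: it is run-away by assumption and eventually (indeed everywhere) injective because each $\phi_n$, being an automorphism, is globally injective. This already produces the desired $(\phi_n)$-universal function in $\hol_{l.u.}(\Omega)$.

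If $\Omega$ is infinitely connected, I would instead apply Theorem~\ref{thm:mult_univ}, whose condition (i) again holds trivially. For condition (ii), fix an $\O$-convex compact set $K$ and $N\in\N$; applying the run-away property to the enlarged compact set $K\cup\bigcup_{j=1}^{N-1}\phi_j(K)$ yields an index $n\geq N$ with $\phi_n(K)\cap K=\emptyset$, the bound $n\geq N$ being forced since an index $j<N$ would require the nonempty set $\phi_j(K)$ to be disjoint from itself. The remaining requirement, that $\phi_n(K)\cup K$ be $\O$-convex, is the step I expect to be the main obstacle: although $\phi_n$, as a homeomorphism of $\Omega$, individually preserves $\O$-convexity of $K$, the $\O$-convexity of the disjoint union is not formal. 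I would secure it from the preservation lemma for run-away automorphism sequences in \cite{bernal1995universal} (their Lemma~2.12), which guarantees that $\phi_n(K)\cup K$ is $\O$-convex for the relevant large indices $n$. With condition (ii) verified, Theorem~\ref{thm:mult_univ} delivers the universal function and completes the proof.
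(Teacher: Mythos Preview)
Your proof is correct and follows essentially the same route as the paper: Proposition~\ref{prop:run-away} for necessity, the trichotomy from \cite{bernal1995universal} to split the sufficiency direction into the simply connected and infinitely connected cases, and then Theorems~\ref{thm:con_univ} and~\ref{thm:mult_univ} respectively, with Lemma~2.12 of \cite{bernal1995universal} supplying the $\O$--convexity of $\phi_n(K)\cup K$. Your treatment is in fact more explicit than the paper's in two places---the observation that a run--away sequence drawn from $\{\phi_n\}$ forces $(\phi_n)$ itself to be run--away, and the trick of applying the run--away property to $K\cup\bigcup_{j<N}\phi_j(K)$ to force $n\ge N$---both of which the paper leaves implicit.
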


Note that Theorem \ref{thm:u1}  is a special instance of Theorem  \ref{cor:aut_univ}.

\begin{remark}
	If $\Omega$ is conformally equivalent to $\C\setminus\{0\}$ then there
        are no universal functions in $\hol_{l.u.}(\Omega)$.
	In fact, it was observed in \cite{bernal1995universal} that there
        are no   universal functions for  $\hol(\Omega)$ in this case.
	The argument is based on the maximum principle and stays valid for locally univalent functions.
\end{remark}

The final result of this section is concerned with universal locally univalent
\textit{meromorphic} functions.
Chan \cite{chan2001universal} has shown that there exists a meromorphic
function $f\in\M(\C)$ such that 
the set $T_f:=\{ f(\cdot+n) \, : \, n \in \N\}$ is dense in $\M(\Omega)$ for
every domain $\Omega\subseteq\C$. In the locally univalent
  situation we need to restrict the
discussion to simply connected domains since
the same reasoning as in Example \ref{ex:hol_convex} shows that if $T_f$ is
dense in $\M_{l.u.}(\Omega)$ for some
$f\in\M_{l.u.}(\C)$, then  $\Omega$ has to be simply connected.

\begin{thm}\label{thm:mer_loc_univ_birk}
    Let $\Omega\subseteq\C$ be a simply connected domain and let $\Phi$ be a
    family of locally univalent self--maps of $\Omega$ which contains a
    run--away and eventually injective sequence $(\phi_n)$. Then there is a $\Phi$--universal
    function in $\M_{l.u.}(\Omega)$ and the set of all such  functions is a dense $G_{\delta}$--subset of $\M_{l.u.}(\Omega)$.
\end{thm}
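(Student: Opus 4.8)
The plan is to prove this by a Baire category argument, exactly parallel to the proof of Theorem~\ref{thm:con_univ} but with the holomorphic Runge theorem replaced by its meromorphic counterpart, part~(b) of Theorem~\ref{thm:r2}; the role of simple connectivity is precisely to make that substitution legitimate.

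First I would record the two structural facts that make the category argument run. The space $\M_{l.u.}(\Omega)$ is separable and is a Baire space: writing $f^{\#}$ for the spherical derivative, the map $f\mapsto f^{\#}$ is continuous from $\M(\Omega)$ into $C(\Omega)$ and $f$ is locally univalent exactly when $f^{\#}$ has no zeros, so for a compact exhaustion $(K_j)$ of $\Omega$ one has $\M_{l.u.}(\Omega)=\bigcap_j\{f:\min_{K_j} f^{\#}>0\}$, a $G_{\delta}$-subset of the Polish space $\M(\Omega)$, hence itself Polish. Moreover each composition operator $C_{\phi_n}\colon f\mapsto f\circ\phi_n$ maps $\M_{l.u.}(\Omega)$ continuously into itself, because $\phi_n$ is a locally univalent self-map. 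By the standard universality (Birkhoff transitivity) criterion it therefore suffices to verify topological transitivity of the family $(C_{\phi_n})$; the set of functions $f$ with $\{f\circ\phi_n:n\in\N\}$ dense is then automatically a dense $G_{\delta}$, and since this set is contained in the set of $\Phi$-universal functions, which is itself $G_{\delta}$ (being the countable intersection, over a dense sequence $(f_k)$ of targets and over $j,m$, of the open sets $\bigcup_{\phi\in\Phi}\{f:\sup_{K_j}\chi(f\circ\phi,f_k)<1/m\}$) and dense, the full conclusion follows.

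The heart of the proof is the transitivity step: given $g,g^{\ast}\in\M_{l.u.}(\Omega)$, a compact $K_0\subseteq\Omega$ and $\varepsilon>0$, I must produce $f\in\M_{l.u.}(\Omega)$ and $n$ with $\chi(f,g)<\varepsilon$ and $\chi(f\circ\phi_n,g^{\ast})<\varepsilon$ uniformly on $K_0$. Using a Riemann map I enlarge $K_0$ to a compact topological disk $K\subseteq\Omega$ (the image of a closed disk), which automatically has connected complement in $\C$. By eventual injectivity $\phi_n|_K$ is injective for all large $n$, and run-away (strengthened, as one checks, to yield arbitrarily large admissible indices) provides such a large $n$ with $\phi_n(K)\cap K=\emptyset$. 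Then $\phi_n(K)$ is again a topological disk, and a short Alexander-duality (or Mayer--Vietoris) argument shows that the union of the two disjoint topological disks $K\cup\phi_n(K)$ still has connected complement in $\C$. I now define $h\in\M_{l.u.}(K\cup\phi_n(K))$ on disjoint neighborhoods by $h:=g$ near $K$ and $h:=g^{\ast}\circ(\phi_n|_K)^{-1}$ near $\phi_n(K)$; both pieces are locally univalent meromorphic with simple poles. Theorem~\ref{thm:r2}(b) yields $f\in\M_{l.u.}(\C)$ approximating $h$ $\chi$-uniformly on $K\cup\phi_n(K)$; its restriction to $\Omega$ lies in $\M_{l.u.}(\Omega)$, satisfies $\chi(f,g)<\varepsilon$ on $K\supseteq K_0$, and, since $(\phi_n|_K)^{-1}\circ\phi_n=\mathrm{id}$ on $K$, also $\chi(f\circ\phi_n,g^{\ast})<\varepsilon$ on $K$.

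The main obstacle is exactly this Runge step. The general meromorphic Runge statement on $\Omega$ is open (Problem~\ref{pro:r1}), so I cannot approximate $h$ directly inside $\Omega$; instead I must force the approximation set $K\cup\phi_n(K)$ to have connected complement in the full plane so that the available global statement Theorem~\ref{thm:r2}(b) applies. This is why $K$ is chosen to be a topological disk and why $\Omega$ is assumed simply connected -- as Example~\ref{ex:hol_convex} shows, for multiply connected $\Omega$ the requisite connectivity can fail and the method breaks down. The two subsidiary points needing care are the upgrade of the bare run-away property to arbitrarily large indices (so that injectivity and disjointness are arranged simultaneously) and the verification that the complement of the union of the two disjoint topological disks is connected.
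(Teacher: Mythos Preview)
Your proposal is correct and follows essentially the same route as the paper: the paper simply states that the proof of Theorem~\ref{thm:mer_loc_univ_birk} is identical to that of Theorem~\ref{thm:con_univ} except that part~(b) of Theorem~\ref{thm:loc_univ_hol_runge} replaces part~(a), which is precisely your strategy. You supply more detail than the paper on why $\M_{l.u.}(\Omega)$ is a Baire space, why the union of the two topological disks has connected complement in $\C$, and on the passage from the bare run--away hypothesis to arbitrarily large admissible indices; these are points the paper leaves implicit.
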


\medskip

 \begin{cor}  \label{cor} There exists a function $f\in\M_{l.u.}(\C)$ such that $T_f=\{ f(\cdot+n) \, : \, n \in \N\}$ is  dense in $\M_{l.u.}(\Omega)$ for every simply connected domain $\Omega$.
\end{cor}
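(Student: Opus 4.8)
The plan is to deduce the corollary from Theorem~\ref{thm:mer_loc_univ_birk} together with the meromorphic Runge--type Theorem~\ref{thm:loc_univ_mer_runge}~(b): I would first produce a single function that is universal on all of $\C$ and then transfer its universality to every simply connected domain. Concretely, I apply Theorem~\ref{thm:mer_loc_univ_birk} with $\Omega=\C$ and $\Phi=\{\phi_n\,:\,n\in\N\}$, where $\phi_n(z):=z+n$. Each $\phi_n$ is a locally univalent self--map of $\C$, the sequence $(\phi_n)$ is trivially eventually injective since every $\phi_n$ is a bijection of $\C$, and it is run--away because $(K+n)\cap K=\emptyset$ for any compact $K\subseteq\C$ once $n$ exceeds the diameter of $K$. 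Theorem~\ref{thm:mer_loc_univ_birk} then supplies a function $f\in\M_{l.u.}(\C)$ for which $T_f=\{f(\cdot+n)\,:\,n\in\N\}$ is dense in $\M_{l.u.}(\C)$. This $f$ is the desired function, and it remains only to show that $T_f$ is in fact dense in $\M_{l.u.}(\Omega)$ for every simply connected $\Omega$.

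So fix a simply connected domain $\Omega$, a target $g\in\M_{l.u.}(\Omega)$, a compact set $K\subseteq\Omega$ and $\varepsilon>0$; the goal is to find $n\in\N$ with $\chi(f(\cdot+n),g)<\varepsilon$ on $K$. The key transfer step replaces $K$ by its polynomially convex hull $\hat{K}$, i.e.\ the union of $K$ with the bounded components of $\C\setminus K$, which has connected complement $\C\setminus\hat{K}$. Here I use the simple connectivity of $\Omega$ to verify that $\hat{K}\subseteq\Omega$: since $\hat{\C}\setminus\Omega$ is connected and contains $\infty$ while $K\subseteq\Omega$, every point of $\C\setminus\Omega$ can be joined to $\infty$ within $\hat{\C}\setminus K$ and therefore lies in the unbounded component of $\C\setminus K$; hence no bounded component of $\C\setminus K$ meets $\C\setminus\Omega$, so $\hat{K}\subseteq\Omega$. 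Consequently $g$ is locally univalent and meromorphic on the open neighborhood $\Omega$ of $\hat{K}$, that is, $g\in\M_{l.u.}(\hat{K})$.

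Now Theorem~\ref{thm:loc_univ_mer_runge}~(b) applies to the set $\hat{K}$, whose complement $\C\setminus\hat{K}$ is connected, and yields a function $h\in\M_{l.u.}(\C)$ with $\chi(h,g)<\varepsilon/2$ on $\hat{K}$. Since $h\in\M_{l.u.}(\C)$ and $T_f$ is dense in $\M_{l.u.}(\C)$ by the first step, there is an $n\in\N$ with $\chi(f(\cdot+n),h)<\varepsilon/2$ on $\hat{K}\supseteq K$. The triangle inequality for the chordal metric $\chi$ then gives $\chi(f(\cdot+n),g)<\varepsilon$ on $K$, which is exactly what was required. Because the single function $f$ was fixed before $\Omega$, $g$, $K$ and $\varepsilon$ were chosen, this shows that $T_f$ is dense in $\M_{l.u.}(\Omega)$ for every simply connected domain $\Omega$ at once.

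I expect the main obstacle to be precisely this transfer step, and within it the verification that passing to the hull $\hat{K}$ does not leave $\Omega$; this is where simple connectivity is indispensable, and it is what allows a globally defined approximant $h\in\M_{l.u.}(\C)$ to be inserted between the $\Omega$--function $g$ and the translates of $f$. One must also keep careful track of local univalence throughout: translations preserve it automatically, and Theorem~\ref{thm:loc_univ_mer_runge}~(b) is designed to deliver a genuinely locally univalent approximant $h$, so that all intermediate functions remain in the relevant classes $\M_{l.u.}$.
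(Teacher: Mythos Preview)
Your proof is correct and follows essentially the same route as the paper: first obtain $f\in\M_{l.u.}(\C)$ with $T_f$ dense in $\M_{l.u.}(\C)$ via Theorem~\ref{thm:mer_loc_univ_birk}, then use Theorem~\ref{thm:loc_univ_mer_runge}(b) to conclude that $\M_{l.u.}(\C)$ is dense in $\M_{l.u.}(\Omega)$ for every simply connected $\Omega$, and finish by a triangle--inequality argument. The paper states the density of $\M_{l.u.}(\C)$ in $\M_{l.u.}(\Omega)$ as an immediate consequence of Theorem~\ref{thm:loc_univ_mer_runge}(b); your treatment just spells out the polynomial--hull step and the verification $\hat{K}\subseteq\Omega$ that underlies this density.
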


\subsection{Universal bounded locally univalent functions}

Theorem \ref{cor:aut_univ}  says that if $\Omega$ is a simply connected domain in
$\C$ and $\Phi\subseteq \aut{\Omega}$, then there is a
$\Phi$--universal function in $\hol_{l.u.}(\Omega)$ if and only if
$\Phi$ contains a run--away sequence. The corresponding problem
for \textit{bounded} locally univalent functions is the following:

\begin{problem} \label{pro:r2}
Let  $\Omega\not=\C$ be a simply connected domain in
$\C$ and let $(\phi_n)$ be a run--away sequence in $\aut{\Omega}$. Does there
exist a $(\phi_n)$--universal function in $\B_{l.u.}(\Omega)$\,?
\end{problem}

We can only prove the weaker result (see Theorem \ref{thm:u2}) that
for any simply connected domain $\Omega \subsetneq \C$ there is an
universal function in $\B_{l.u.}(\Omega)$. The idea is to ``remove'' the
critical points of  a universal function in $\B(\Omega)$ while keeping the
function bounded. Such a construction is based on the following
considerations for which we initially assume that $\Omega$ is the unit disk $\D$.

\medskip
For any non--constant $f\in\M(\D)$ we  denote by $\Omega_f$  the set of all non-critical
points of $f$. Then $\Omega_f$ is a subdomain of $\D$, so 
by the Uniformization Theorem there exists a universal covering map $\Psi$
from $\D$ onto $\Omega_f$ which is uniquely determined up to pre-composition
with a unit disc automorphism. Clearly,  $f\circ\Psi$ is a locally univalent
meromorphic function on $\D$.

\begin{thm}\label{thm:lu_universal_functions}
	Let $\mathcal{G}\subseteq\M(\D)$ and let  $G\in\M(\D)$ be a
        non-constant universal function in $\mathcal{G}$. Suppose that $\Psi$
        is a universal covering map from $\D$ onto $\Omega_G$.
	Then $F\coloneqq G\circ\Psi \in \mathcal{M}_{l.u.}(\D)$ and for each 
$f \in \mathcal{G}_{l.u.}$ there is a sequence $(\phi_n)$ in $\aut{\D}$ such that
$F \circ \phi_n \to f$ locally uniformly in $\D$ w.r.t.~the chordal metric.
\end{thm}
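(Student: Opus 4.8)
The plan is to first verify the easy assertion that \(F=G\circ\Psi\in\M_{l.u.}(\D)\), and then to reduce the density statement to a single approximation problem on one compact set. The covering map \(\Psi\) is a local biholomorphism onto \(\Omega_G\), and by the very definition of \(\Omega_G\) the restriction \(G|_{\Omega_G}\) is locally univalent (no critical points, only simple poles there), so the composite \(G\circ\Psi\) is locally univalent on \(\D\); thus \(F\in\M_{l.u.}(\D)\). For the main claim it suffices, after fixing a compact exhaustion \(\overline{D(0,r_1)}\subseteq\overline{D(0,r_2)}\subseteq\cdots\) of \(\D\) by closed disks and performing a routine diagonal extraction, to show: given \(f\in\mathcal{G}_{l.u.}\), a closed disk \(K\), and \(\varepsilon>0\), there exists a single \(\phi\in\aut{\D}\) with \(\sup_{K}\chi(F\circ\phi,f)<\varepsilon\). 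The sequence \((\phi_n)\) with \(F\circ\phi_n\to f\) locally \(\chi\)--uniformly then follows.

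Next I would approximate and lift. Since \(G\) is universal in \(\mathcal{G}\) and \(f\in\mathcal{G}_{l.u.}\subseteq\mathcal{G}\), I would choose \(\psi\in\aut{\D}\) with \(G\circ\psi\) arbitrarily close to \(f\), \(\chi\)--uniformly on a slightly larger closed disk \(K'\supseteq K\). Because \(f\) is locally univalent near \(K'\), a Hurwitz--type stability argument (applied to \((G\circ\psi)'\) away from the poles of \(f\) and to \(1/(G\circ\psi)\) near them) shows that a sufficiently good approximant \(G\circ\psi\) is itself locally univalent on \(K'\), i.e.\ \(\psi(K')\subseteq\Omega_G\). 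As \(K'\) is simply connected, \(\psi|_{K'}\colon K'\to\Omega_G\) lifts through the covering \(\Psi\) to an injective holomorphic map \(g\colon K'\to\D\) with \(\Psi\circ g=\psi\), whence \(F\circ g=G\circ\psi\) is \(\chi\)--uniformly close to \(f\) on \(K'\).

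The difficulty I expect to dominate is the passage from the lift \(g\) to a genuine automorphism. The map \(g\) is only a holomorphic self--map of \(\D\) on \(K'\), not the restriction of an element of \(\aut{\D}\); in fact the relation \(\Psi\circ g=\psi\) forces \(g\) to be a strict hyperbolic contraction whenever \(\Omega_G\subsetneq\D\) (the hyperbolic density \(\rho_{\Omega_G}\) strictly exceeds \(\rho_{\D}\) there), so \(g\) cannot in general be matched by an element of \(\aut{\D}\) on a fixed compact set. One cannot sidestep this by naive pointwise approximation either: since \(F=G\circ\Psi\) inherits the wild boundary behaviour of the universal function \(G\), mere chordal closeness of \(\phi\) to \(g\) on \(K\) does \emph{not} yield closeness of \(F\circ\phi\) to \(F\circ g\). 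Reconciling the contraction inherent in lifting with the isometric nature of automorphisms is therefore the heart of the proof and must be carried out in a way adapted to the geometry of \(\Psi\).

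My proposed route for this crux is an iterative construction exploiting the run--away and eventually injective structure that universality provides: build \(\phi\) as a limit of automorphisms, at each stage invoking the universality of \(G\) on a larger disk and absorbing the accumulated error by passing into a fresh sheet of the covering \(\Psi\)—a move that leaves \(F=G\circ\Psi\) unchanged while freeing up room to readjust \(\phi\). The delicate point, where I expect essentially all the work to lie, is to make this scheme converge uniformly over the exhaustion, controlling the interplay between where universality drives the relevant images and the location of the critical set of \(G\), so that a single sequence \((\phi_n)\) emerges with \(F\circ\phi_n\to f\) locally \(\chi\)--uniformly. By contrast, the remaining ingredients—local univalence of \(F\), the Hurwitz stability of local univalence under \(\chi\)--convergence, and the existence of the lift on simply connected sets—are comparatively soft.
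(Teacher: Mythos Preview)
Your setup through the lift \(g\) is fine, and you correctly locate the real obstacle: \(g\) is a strict contraction, not the restriction of an automorphism, and pointwise approximation of \(g\) by automorphisms cannot be pushed through \(F\). But your proposed resolution---an ``iterative construction'' that passes to fresh sheets of the covering---is not an argument; it is a hope. Deck transformations do leave \(F\) invariant, but they are isometries of the hyperbolic metric just as all of \(\aut{\D}\) is, so composing with them does nothing to cure the contraction in \(g\). There is no mechanism in your outline that ever produces a single \(\phi\in\aut{\D}\) with \(F\circ\phi\) close to \(f\) on \(K\), and the vague phrase ``absorbing the accumulated error'' hides exactly the step that needs to be done.

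The paper bypasses this difficulty by a different and much cleaner device. Rather than lifting the automorphism \(\alpha_n\) through \(\Psi\), one considers the normalized universal covering \(\Psi_{G\circ\alpha_n}\colon\D\to\Omega_{G\circ\alpha_n}\). Since \(\alpha_n(\Omega_{G\circ\alpha_n})=\Omega_G\), the map \(\alpha_n\circ\Psi_{G\circ\alpha_n}\) is itself a universal covering of \(\Omega_G\), and therefore equals \(\Psi_G\circ\phi_n\) for some \(\phi_n\in\aut{\D}\)---exactly, with no approximation. The approximation enters only through Hejhal's kernel convergence theorem: as \(G\circ\alpha_n\to f\) locally \(\chi\)-uniformly, Hurwitz gives that \(\Omega_f\) is the Carath\'eodory kernel of the sequence \(\Omega_{G\circ\alpha_n}\), whence \(\Psi_{G\circ\alpha_n}\to\Psi_f=\id\) locally uniformly (the last equality because \(f\) is locally univalent). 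Combining, \(F\circ\phi_n=G\circ\Psi_G\circ\phi_n=G\circ\alpha_n\circ\Psi_{G\circ\alpha_n}\to f\circ\id=f\). The missing idea in your proposal is precisely this use of the covering map of the \emph{varying} domain \(\Omega_{G\circ\alpha_n}\) together with Hejhal's convergence result; once you have it, there is no iteration and no error to absorb.
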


Note that Theorem \ref{thm:lu_universal_functions} does not assert that
$F$ is $\Phi$--universal in $\mathcal{G}_{l.u.}$, since in general
$F \not\in \mathcal{G}$. 

\medskip
 
Theorem \ref{thm:u2} for $\Omega=\D$ follows immediately from Theorem
\ref{thm:lu_universal_functions} by taking for $G$ a universal function in
$\B(\D)$. One can even take a universal Blaschke product in
$\B(\D)$,  see, for instance, \cite{heins1954universal} and \cite{gorkin2004universal}.
Using the Riemann Mapping Theorem, the general case of Theorem \ref{thm:u2} that $\Omega$ is a proper
simply connected subdomain of $\C$ easily reduces to the case of the unit disk.

\subsection{Universal conformal metrics with constant curvature}

In this section we investigate universal conformal metrics with constant
curvature. We shall use  the following terminology.
\begin{definition}
Let $\Omega$ be a domain in $\C$ and let $\Phi$ be a family of locally univalent
self--maps of $\Omega$. Let
   $c$ be a real number. We call   $\Lambda \in \Lambda_c(\Omega)$
   $\Phi$--universal in $\Lambda_c(\Omega)$ if the family of pullbacks
$\{\phi^*\Lambda \, : \, \phi \in \Phi\}$ is dense in $\Lambda_c$.
We call $\Lambda$ universal in $\Lambda_c(\Omega)$  if $\Lambda$
is $\aut{\Omega}$--universal in $\Lambda_c(\Omega)$.
\end{definition}

We note that for $c<0$ the set $\Lambda_c(\Omega)$ is non--empty if and only
if $\Omega$ is a hyperbolic domain, that is, $\C \setminus \Omega$ has at
least two distinct points.

\begin{thm} \label{thm:u4}
Let $\Omega$ be a simply connected domain in $\C$ and let $c$ be a
non--negative real number. Suppose that 
 $\Phi$ is a family of locally univalent self--maps of $\Omega$ which contains
 a run--away and eventually injective sequence.
 Then there exists a $\Phi$--universal $\Lambda \in \Lambda_c(\Omega)$.
\end{thm}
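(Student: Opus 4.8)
The plan is to represent every constantly curved conformal metric as the pullback of a fixed model metric under a locally univalent \emph{developing map}, and thereby to transport the universality theorems for locally univalent functions (Theorems \ref{thm:con_univ} and \ref{thm:mer_loc_univ_birk}) to the level of metrics. Since $c\ge 0$, two cases arise.

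First I would recall the Liouville--type representation of constant curvature metrics on a simply connected domain (see Ahlfors \cite{ahlfors2010} or Simon \cite{simon2015}). For $c=0$ the density $\mu$ of a flat metric has $\log\mu$ harmonic, so $\mu=|g'|$ for some $g\in\hol_{l.u.}(\Omega)$, and conversely $|g'|\in\Lambda_0(\Omega)$ for every such $g$. For $c>0$ every $\mu\in\Lambda_c(\Omega)$ is the pullback of the spherical metric of curvature $c$ under a locally univalent meromorphic developing map, that is,
\[
  \mu=\frac{2}{\sqrt c}\,g^{\#},\qquad g^{\#}:=\frac{|g'|}{1+|g|^{2}},\qquad g\in\M_{l.u.}(\Omega),
\]
and conversely each such $\tfrac{2}{\sqrt c}g^{\#}$ is the density of a regular metric in $\Lambda_c(\Omega)$ (positivity follows from local univalence of $g$, and smoothness across poles from the identity $g^{\#}=(1/g)^{\#}$). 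A direct computation using the chain rule $|(g\circ\phi)'|=(|g'|\circ\phi)\,|\phi'|$ shows that in both cases the developing representation intertwines pullback with composition:
\[
  \phi^{*}\bigl(|g'|\bigr)=\bigl|(g\circ\phi)'\bigr|,\qquad
  \phi^{*}\!\Bigl(\tfrac{2}{\sqrt c}\,g^{\#}\Bigr)=\tfrac{2}{\sqrt c}\,(g\circ\phi)^{\#}.
\]

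Next I would fix the universal metric. For $c=0$, Theorem \ref{thm:con_univ} provides a $\Phi$--universal $f\in\hol_{l.u.}(\Omega)$, and I set $\Lambda:=|f'|$; for $c>0$, Theorem \ref{thm:mer_loc_univ_birk} provides a $\Phi$--universal $f\in\M_{l.u.}(\Omega)$, and I set $\Lambda:=\tfrac{2}{\sqrt c}f^{\#}$. In either case $\Lambda\in\Lambda_c(\Omega)$ by the remarks above. Given an arbitrary target $\mu\in\Lambda_c(\Omega)$, I write $\mu$ as the developing metric of some $g$, use universality to choose $\phi_n\in\Phi$ with $f\circ\phi_n\to g$ (locally uniformly for $c=0$, locally $\chi$--uniformly for $c>0$), and invoke the intertwining identity. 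The desired convergence $\phi_n^{*}\Lambda\to\mu$ in $C(\Omega)$ then reduces to showing that the developing densities of $f\circ\phi_n$ converge to that of $g$.

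The crux, and the step I expect to be the main obstacle, is therefore the continuity of the developing representation into $C(\Omega)$. For $c=0$ this is routine, since locally uniform convergence of holomorphic functions forces locally uniform convergence of their derivatives, hence $|(f\circ\phi_n)'|\to|g'|$. For $c>0$ I must show that $h_n\to h$ locally $\chi$--uniformly, with all $h_n,h$ locally univalent meromorphic, implies $h_n^{\#}\to h^{\#}$ locally uniformly. I would argue locally: near a point $z_0$ I pass to the coordinate $w$ if $h(z_0)\ne\infty$ and to $1/w$ if $h(z_0)=\infty$, so that $h$ is holomorphic and bounded on a small disk about $z_0$; chordal convergence then prevents the $h_n$ from approaching the value $\infty$ there, so for large $n$ the $h_n$ are holomorphic, uniformly bounded, and converge to $h$ in the Euclidean sense, whence their derivatives converge. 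The rotation invariance $h^{\#}=(1/h)^{\#}$ of the spherical derivative finally yields $h_n^{\#}\to h^{\#}$. Combining this continuity statement with the intertwining identity and the choice of $f$ gives $\phi_n^{*}\Lambda\to\mu$, so that $\Lambda$ is $\Phi$--universal in $\Lambda_c(\Omega)$, as required.
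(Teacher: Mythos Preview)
Your proposal is correct and follows essentially the same route as the paper: represent $\Lambda_c(\Omega)$ via Liouville's developing map, pull the universality problem back to $\hol_{l.u.}(\Omega)$ (for $c=0$) resp.\ $\M_{l.u.}(\Omega)$ (for $c>0$), and apply Theorems \ref{thm:con_univ} and \ref{thm:mer_loc_univ_birk}. The paper packages the transfer step as a separate proposition (Proposition \ref{thm:liouvilleuniversal}) and treats the convergence of pullbacks as ``clear'', whereas you spell out the continuity of $g\mapsto g^{\#}$ under local $\chi$--convergence more carefully; otherwise the arguments coincide.
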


In particular, under the assumptions of Theorem \ref{thm:u4} there
is always an $\aut{\Omega}$--universal
element in $\Lambda_c(\Omega)$. This proves the cases $c \ge 0$ of Theorem \ref{thm:u3}.
The cases of constant \textit{negative} curvature appear to be more difficult:
 
\begin{problem} \label{prop:metric}
Let $\Omega$ be a simply connected proper subdomain of $\C$ and $c<0$.
Which families $\Phi$ of locally univalent self--maps of $\Omega$ 
admit $\Phi$--universal elements in $\Lambda_c(\Omega)$?
\end{problem}

Only in the  case $\Phi=\aut{\Omega}$ we are able to show that the answer to
Problem \ref{prop:metric} is affirmative. This, at least,  proves  the cases $c <0$ of
Theorem \ref{thm:u3}.

\medskip

The proof of Theorem \ref{thm:u4} and the case $\Phi=\aut{\Omega}$ of Problem
\ref{prop:metric} below is  based on the classical
representation theorem of  Liouville for constantly curved conformal metrics
in terms of  locally univalent holomorphic maps and the universality results
for these maps from Section 2.2 and Section 2.3.
A major drawback of this approach lies in the fact that Liouville's
representation theorem only works for \textit{simply connected} domains.



\medskip

The case of constant curvature $0$ is particularly interesting since
a conformal metric $\lambda(z) \, |dz|$ has curvature $0$ if and only if $u:=
\log \lambda$ is harmonic. We can therefore also employ the Runge--type
theorems for harmonic functions (see for example Theorem 3 in
\cite{gauthier1980uniform} or Theorem 4 in \cite{gardiner1994superharmonic})
to prove existence of universal conformal metrics with constant curvature $0$,
even for not simply connected domains:

\begin{thm}\label{thm:harm_univ}	Let $\Omega\subseteq\C$ be a domain of infinite connectivity and let
        $\Phi$ be a family of locally univalent self--maps of $\Omega$. 
	Suppose that there exists a sequence $(\phi_n)$ in $\Phi$ such that
	\begin{enumerate}[(i)]
		\item $(\phi_{n})$ is eventually injective, and
		\item for every $\O$--convex compact set $K\subseteq\Omega$ and every $N\in\N$ there exists $n\geq N$ such that $\phi_{n}(K)\cap K=\emptyset$ and $\phi_{n}(K)\cup K$ is $\O$--convex.
	\end{enumerate}
Then the following hold:
    \begin{enumerate}[(a)]
        \item
            There exists a harmonic function $u$ on $\Omega$ such that
$\{u \circ \phi \, : \, \phi \in \Phi\}$ is dense in the set of all harmonic
functions on $\Omega$.
        \item There exists a $\Phi$--universal element in $\Lambda_0(\Omega)$.
    \end{enumerate}
\end{thm}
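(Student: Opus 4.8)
The plan is to prove (a) and (b) together by recognizing that both reduce to a Birkhoff--type transitivity statement for a family of affine composition operators on the separable Fréchet space $\mathrm{Har}(\Omega)$ of all harmonic functions on $\Omega$ (a closed, hence Baire, subspace of $C(\Omega)$). The point of departure is the observation that $\lambda(z)\,|dz|\in\Lambda_0(\Omega)$ precisely when $u:=\log\lambda$ is harmonic, so that $u\mapsto e^{u}$ is a homeomorphism of $\mathrm{Har}(\Omega)$ onto $\Lambda_0(\Omega)$. Moreover, for a locally univalent self--map $\phi$ of $\Omega$ one computes $\log(\phi^*e^{u})=u\circ\phi+\log|\phi'|$, and here $\log|\phi'|$ is itself harmonic on $\Omega$ because $\phi'\ne 0$. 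Thus, writing $T_\phi(u):=u\circ\phi+\log|\phi'|$, the pullback on $\Lambda_0(\Omega)$ corresponds under $\log$ to the affine operator $T_\phi$ on $\mathrm{Har}(\Omega)$, while part (a) is the special case in which the conformal factor is dropped, i.e.\ the linear composition operator $C_\phi(u):=u\circ\phi$.

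I would then invoke the Birkhoff transitivity theorem (see \cite{erdmann1999universal}): for a sequence $(T_{\phi_n})$ of continuous self--maps of the separable Baire space $\mathrm{Har}(\Omega)$, the set of $u$ whose orbit $\{T_{\phi_n}(u):n\in\N\}$ is dense is a dense $G_\delta$, provided the family is topologically transitive, i.e.\ for every pair of nonempty open sets $U,V\subseteq\mathrm{Har}(\Omega)$ there is an $n$ with $T_{\phi_n}(U)\cap V\ne\emptyset$. Since the topology is the compact--open one, it suffices to verify this for basic neighborhoods: given $f,g\in\mathrm{Har}(\Omega)$, an $\O$--convex compact $K\subseteq\Omega$ and $\varepsilon>0$, I must produce an index $n$ and a function $u\in\mathrm{Har}(\Omega)$ with $\sup_K|u-f|<\varepsilon$ and $\sup_K|T_{\phi_n}(u)-g|<\varepsilon$.

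The construction of such a $u$ is where the hypotheses and harmonic Runge theory enter. Using (i) and (ii), I would select $\phi_n$ from the given sequence so that $\phi_n|_K$ is injective, $\phi_n(K)\cap K=\emptyset$, and $L:=K\cup\phi_n(K)$ is $\O$--convex. On a neighborhood of $L$ define the function equal to $f$ near $K$ and equal to $(g-\log|\phi_n'|)\circ\phi_n^{-1}$ near $\phi_n(K)$; this is well defined and harmonic near $L$ because $\phi_n^{-1}$ is holomorphic on a neighborhood of $\phi_n(K)$ (inverse of an injective holomorphic map) and $g-\log|\phi_n'|$ is harmonic. Since $L$ is $\O$--convex, the harmonic Runge--type theorem for domains of infinite connectivity (Theorem~3 in \cite{gauthier1980uniform}, or Theorem~4 in \cite{gardiner1994superharmonic}) furnishes $u\in\mathrm{Har}(\Omega)$ approximating this target uniformly on $L$ to within a preassigned $\delta$. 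Then $u\approx f$ on $K$, while for $z\in K$ one has $T_{\phi_n}(u)(z)=u(\phi_n(z))+\log|\phi_n'(z)|\approx g(z)$; choosing $\delta$ small yields the two required estimates, establishing transitivity.

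Finally, transitivity produces a dense $G_\delta$ of universal elements. For (a), any universal $u_0$ for the operators $C_{\phi_n}$ satisfies that $\{u_0\circ\phi_n:n\in\N\}$, and a fortiori $\{u_0\circ\phi:\phi\in\Phi\}$, is dense in $\mathrm{Har}(\Omega)$. For (b), any universal $u_0$ for the affine operators $T_{\phi_n}$ gives $\Lambda:=e^{u_0}\in\Lambda_0(\Omega)$, and applying the homeomorphism $w\mapsto e^w$ to the dense orbit $\{T_{\phi_n}(u_0)\}$ shows that $\{\phi_n^*\Lambda\}\subseteq\{\phi^*\Lambda:\phi\in\Phi\}$ is dense in $\Lambda_0(\Omega)$. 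I expect the main obstacle to be the conformal factor in the pullback: the whole argument hinges on $\log|\phi'|$ being harmonic, which is guaranteed only because the maps in $\Phi$ are locally univalent, so that the extra term can be absorbed into the Runge--approximation target without introducing singularities. The simultaneous prescription of $u$ on the two disjoint pieces $K$ and $\phi_n(K)$ then rests precisely on the run--away, eventual injectivity and $\O$--convexity--preservation hypotheses (i) and (ii).
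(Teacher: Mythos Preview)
Your argument is correct and, for part~(a), identical in spirit and detail to the paper's: both apply the Birkhoff transitivity criterion to the composition operators $C_\phi$ on $\mathrm{Har}(\Omega)$, with the two--piece target approximated via the harmonic Runge theorem.

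For part~(b) you take a slightly different, and in fact cleaner, route. The paper works directly in $\Lambda_0(\Omega)$ and verifies transitivity of the pullback maps $\lambda\mapsto\phi^*\lambda$; because of the multiplicative conformal factor $|\phi_n'|$ it must choose the Runge approximant $u_n$ so that $\|e^{u_n}-e^{h_n}\|_{L_n}\le \min\{1/n,\ \|\phi_n'\|_{K_n}/n\}$, an extra bookkeeping step. You instead conjugate by the homeomorphism $\log\colon\Lambda_0(\Omega)\to\mathrm{Har}(\Omega)$, which turns the pullback into the \emph{affine} operator $T_\phi(u)=u\circ\phi+\log|\phi'|$ on harmonic functions; the additive term $\log|\phi'|$ is then absorbed into the Runge target $(g-\log|\phi_n'|)\circ\phi_n^{-1}$, and a single uniform estimate $\|u-h\|_L<\varepsilon$ on $L$ immediately yields $\|T_{\phi_n}(u)-g\|_K<\varepsilon$. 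The two arguments are conjugate to one another, but your formulation avoids the explicit control of $\|\phi_n'\|_{K_n}$. One small point you should make explicit: to have $\phi_n^{-1}$ holomorphic on a \emph{neighborhood} of $\phi_n(K)$, first enlarge $K$ to a slightly bigger $\O$--convex compact $K'$ containing $K$ in its interior and apply conditions~(i)--(ii) to $K'$; the paper does this as well.
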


For other universality results for harmonic functions, see
\cite{dzagnidze1964universal}, \cite{armitage2003harmonic} or \cite{calderonmueller2004}.

\begin{remark}
Suppose that $u$ is a harmonic function on a domain $\Omega$ in $\C$ such that the conclusion
(a) of Theorem \ref{thm:harm_univ} holds. Then one might be
inclined to suspect that $e^{u}$ is an $\Phi$--universal element in
$\Lambda_0(\Omega)$. However, the presence of the 
``conformal factor'' $|\phi'(z)|$ in the pullback
$\phi^*e^{u}(z)=e^{u(\phi(z))} |\phi'(z)|$ calls this into question.
\end{remark}

\begin{remark}[SK--metrics]
Finally we would like to indicate that it is tempting to replace the classes
$\Lambda_c(\Omega)$ of conformal metrics with constant curvature $c$ by other
classes of conformal metrics which are invariant under pullback.
The most prominent example is perhaps the class $\text{SK}(\Omega)$ of
(densities of) SK--metrics
 as introduced by M.~Heins \cite{heins1962class}. Now, in order to study
 universality aspects for $\text{SK}(\Omega)$  one  first needs to specify the underlying
 topology. In view of the fact that (densities of) SK--metrics are subharmonic
 functions, there  are two natural topologies for this purpose (see \cite{gauthier2007universal}):
\begin{itemize}
\item[(T1)] Topology of locally uniform convergence; 
\item[(T2)] Topology of decreasing convergence.
\end{itemize}
Both topologies have been employed e.g.~in \cite{gauthier2007universal} for
proving universality results for  subharmonic functions. While the (T1) topology is
naturally restricted to the subclass of \textit{continuous} subharmonic
functions (see the discussion in \cite{gauthier2007universal}, in particular Section 3), the
(T2) topology  turns out to be fairly natural for the class of \textit{all} subharmonic functions (see e.g.~Theorem
3.4 in \cite{gauthier2007universal}). However, the (T2) topology is not convenient
for studying universality for SK--metrics\,! The reason is that one can show 
that the strong form of  Ahlfors' Lemma (\cite[Theorem 7.1]{heins1962class})
implies that w.r.t.~the topology of decreasing convergence the
density of the hyperbolic metric
$\lambda_{\Omega}(z)\dz$ of the domain $\Omega$ is an \textit{isolated} point of $\text{SK}(\Omega)$ and 
in fact the  only candidate for a universal SK--metric in this
setting. This, however,  is clearly not possible
since $\phi^*\lambda_{\Omega}=\lambda_{\Omega}$ for every $\phi \in \aut{\Omega}$.
\end{remark}

We are therefore led to consider the set  $\text{SK}_c(\Omega):=\text{SK}(\Omega) \cap
C(\Omega)$ of all \textit{continuous} SK--metrics equipped the topology of
locally uniform convergence.

\begin{problem}
Let $\Omega \subseteq \C$ be a hyperbolic domain.
    Does there exist a continuous \textrm{SK}--metric
    $\Lambda(z)\dz$ on $\Omega$  such that
$ \{ \phi^* \Lambda \, : \, \phi \in \aut{\Omega}\}$ is dense in $\textrm{SK}_c(\Omega)$\,?
\end{problem}

\section{Proofs}

We first introduce some notation.

\medskip

We denote by  $\operatorname{tr}(\gamma)$  the
trace of a curve $\gamma$ in $\C$ and by $\operatorname{ind}_{\gamma}(z)$  the winding number of $\gamma$ around $z$.
Let $U$ be an open set in $\C$ and let $\hol_{\neq0}(U)$ be the set of
all functions in $\hol(U)$ with no zeros in $U$.
For a set $M$ in $\C$ we write $f\in\hol_{\neq0}(M)$ if there is
an open neighborhood $U$ of $M$ such that $f\in\hol_{\neq0}(U)$.
For a compact set $K$ in $\C$ we  set
$$ ||f-g||_K:=\max \limits_{z \in K} |f(z)-g(z)| $$
if $f$ and $g$ are holomorphic functions in a neighborhood of $K$, 
and
$$ \chi_K(f,g):=\max \limits_{z \in K} \chi(f(z),g(z)) $$
if $f$ and $g$ are meromorphic functions in a neighborhood of $K$.

\begin{prop}\label{prop:K_connected_Oconvex1}
	Let $\Omega$ be a domain in $\C$, $K$ a compact $\O$--convex set in
        $\Omega$  and $\varepsilon>0$.
\begin{enumerate}[(a)]
\item 
	Suppose $f\in\hol_{\neq0}(K)$.
	Then there exists a connected compact $\O$--convex set $M$ in $\Omega$
        with piecewise differentiable boundary $\partial M$ such that $K\subseteq M$ and a function $g\in\hol_{\neq0}(M)$ with $\|f-g\|_K<\varepsilon$.
\item 
	Suppose $f\in\M_{l.u.}(K)$.
	Then there exists a compact $\O$--convex set $M$ in $\Omega$ with
        connected interior $M^{\circ}$  such that $K\subseteq M^{\circ}$ and a
        function $g\in\M_{l.u.}(M)$ with $\chi_K(f,g)<\varepsilon$.
If $f \in\hol_{l.u.}(K)$, then $g \in\hol_{l.u.}(M)$ with $\|f-g\|_K<\varepsilon$. 
\end{enumerate}
\end{prop}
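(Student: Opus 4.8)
The plan is to split the proof into a geometric step, in which the enlarged set $M$ is produced, and an analytic step, in which the function $g$ is constructed. First I would build $M$ by a standard grid construction: cover $K$ by the closed squares of a sufficiently fine square grid and let $M_0$ be the union of all squares meeting $K$; this already has piecewise differentiable (in fact piecewise linear) boundary and contains $K$ in its interior once the mesh is small enough. To restore $\O$--convexity I would adjoin to $M_0$ the closures of all components of $\Omega\setminus M_0$ that are relatively compact in $\Omega$; since $K$ is $\O$--convex these added pieces stay within $\Omega$ and away from $K$, and the result remains compact with piecewise differentiable boundary. Finally, to achieve connectedness (for (a)) or connectedness of the interior (for (b)), I would join the finitely many components by thin polygonal tubes running inside the connected domain $\Omega$ and fill once more; the outcome is a compact $\O$--convex $M\subseteq\Omega$ of the required form with $K\subseteq M$ (respectively $K\subseteq M^{\circ}$).

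For part (a) the point is that $f$ is only defined and zero--free on a neighbourhood $U$ of $K$, so it must be replaced by a genuinely zero--free function on the larger $M$. I would work with $f'/f$, which is holomorphic on $U$, and record the periods $c_j:=\frac{1}{2\pi i}\int_{\gamma_j}\frac{f'}{f}\,dz\in\Z$ over a basis of cycles $\gamma_j$ of $U$ near $K$. Here $\O$--convexity is decisive: every such cycle surrounds only points of $\C\setminus\Omega$, so one may pick $a_j\in\C\setminus\Omega$ inside the corresponding holes; then $\frac{f'}{f}-\sum_j c_j\,(z-a_j)^{-1}$ has vanishing periods and is therefore the derivative of a single--valued holomorphic $p$ near $K$, giving the local factorisation $f=e^{p}\prod_j(z-a_j)^{c_j}$. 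Approximating $p$ uniformly on $K$ by a function $\tilde p$ holomorphic on $\Omega$ (classical Runge, using $\O$--convexity of $K$) and setting $g:=e^{\tilde p}\prod_j(z-a_j)^{c_j}$ produces a function holomorphic and zero--free on all of $\Omega$, hence on $M$, with $\|f-g\|_K<\varepsilon$.

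For part (b) I would reduce local univalence to a zero--free condition. In the holomorphic case $f'\in\hol_{\neq0}(K)$; applying part (a) to $f'$ yields a zero--free $g_1$ on $M$ close to $f'$, and I would set $g(z):=f(z_0)+\int_{z_0}^{z}g_1(\zeta)\,d\zeta$, which is locally univalent since $g'=g_1\neq0$ and close to $f$ on $K$. In the meromorphic case I would pass to the Schwarzian: $S_f$ is holomorphic near $K$, I approximate it by a $q$ holomorphic on $\Omega$ (again classical Runge), solve the associated linear equation $w''+\tfrac12 q\,w=0$ on $M$, and take $g=w_1/w_2$ for two independent solutions, so that $S_g=q$, the poles of $g$ are simple and $g'\neq0$. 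Matching the initial data $f(z_0),f'(z_0),f''(z_0)$ at a base point and invoking continuous dependence of the solutions on $q$ and on the initial conditions gives $\chi_K(f,g)<\varepsilon$ (respectively $\|f-g\|_K<\varepsilon$ when $f$ is holomorphic).

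The main obstacle, in both the integration step of the holomorphic case and the Schwarzian step of the meromorphic case, is \emph{single--valuedness of $g$ on the multiply connected set $M$}: the antiderivative of $g_1$ is well defined only if all periods $\int_{\gamma}g_1\,d\zeta$ vanish, and $g=w_1/w_2$ is single--valued only if the $\mathrm{PSL}_2(\C)$--monodromy of the Schwarzian equation around every cycle of $M$ is trivial. Since $f$ itself is single--valued near $K$ these obstructions vanish for $f'$ respectively $S_f$, but the approximations $g_1$ and $q$ need not inherit this exactly. I would therefore insert a period-- (respectively monodromy--) killing step: the finitely many obstructions depend continuously on the free parameters of the approximation (the functions added to $\tilde p$, respectively $q$, that are holomorphic on $\Omega$ with prescribed small periods around the unfilled holes), and a linear-algebra or implicit-function argument lets one annihilate them by an arbitrarily small perturbation without destroying zero--freeness, local univalence, or the bound on $K$. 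The piecewise differentiable boundary of $M$ is what makes these period integrals meaningful and is exploited again in the subsequent contour arguments.
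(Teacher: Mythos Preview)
Your plan is correct in spirit but vastly more complicated than what this preliminary proposition actually requires, and in the meromorphic case of (b) it contains a step that is not only unjustified but would, if it worked, essentially settle the open Problem~2.2 of the paper.

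The paper proceeds in the \emph{opposite order}: it first produces $g$ and only afterwards $M$. For (a), one simply applies classical Runge to $f$ to obtain a rational function $g\in\hol(\Omega)$ with $\|f-g\|_K<\varepsilon$; by Hurwitz, $g$ is zero--free on a neighbourhood of $K$, and its finitely many zeros $z_1,\dots,z_N$ in $\Omega$ all lie in $\Omega\setminus K$. Since $K$ is $\O$--convex, each $z_j$ can be joined to $\partial\Omega$ by a path $\gamma_j$ in $\Omega\setminus K$, and the slit domain $W=\Omega\setminus\bigcup_j\operatorname{tr}(\gamma_j)$ is connected, contains $K$, and carries $g$ zero--free. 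Now $M$ is just any sufficiently large member of a standard compact exhaustion of $W$ by connected $\O$--convex sets with piecewise smooth boundary. Part (b) is genuinely ``similar'': approximate $f$ by a rational $g$ (Euclidean or chordal Runge), observe via Hurwitz that $g$ is locally univalent near $K$, and cut out its finitely many critical points in $\Omega$ by paths to the boundary before choosing $M$ inside the resulting slit domain. Because $g$ is rational from the start, there is no single--valuedness issue whatsoever.

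By contrast, you fix $M$ first and then try to manufacture $g$ on $M$. For (a) your factorisation $f=e^{p}\prod(z-a_j)^{c_j}$ with $a_j\in\C\setminus\Omega$ does work and even yields $g\in\hol_{\neq 0}(\Omega)$; this is essentially the zero--free Runge theorem, which the paper simply quotes later (Theorem 6.3.1 in \cite{narasimhan1985complex}) rather than reproving here. For (b), however, your ``period--killing'' in the holomorphic case is exactly the content of the paper's Theorem~3.3 together with the Gunning--Narasimhan lemma---the substantive part of the main Runge theorem that Proposition~3.1 is meant to \emph{prepare}, not to contain. And in the meromorphic case your proposal to annihilate the $\mathrm{PSL}_2(\C)$--monodromy of $w''+\tfrac12 q\,w=0$ around the cycles of a general multiply connected $M$ by a small perturbation of $q$ is not something one can wave through with ``an implicit--function argument'': the paper does not know how to do this and accordingly proves Theorem~2.1(b) only under the extra hypothesis that $\C\setminus K$ is connected, leaving the general case open. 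So for this proposition you should adopt the paper's order---rational approximation first, then excise the bad points to build $M$---and save the period machinery for where it is genuinely needed.
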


\begin{proof}
	We only prove part \textit{(a)}; the proof of part \textit{(b)} is similar.
	By the classical theorem of Runge and by Hurwitz' theorem, there
        exists a rational function $g\in\hol(\Omega)\cap\hol_{\neq0}(K)$ such that $\|f-g\|_K<\varepsilon$.
	Let $z_1,\dots,z_N$ be the zeros of $g$ in $\Omega$.
	Since $K$ is $\O$--convex, there exist paths $\gamma_j\colon[0,1)\rightarrow\Omega\setminus K$ with $\gamma_j(0)=z_j$, $\gamma_j(t)\rightarrow\partial\Omega$ for $t\rightarrow 1$ and such that $W\coloneqq \Omega\setminus(\operatorname{tr}(\gamma_1)\cup\cdots\cup\operatorname{tr}(\gamma_N))$ is connected.
	Note that $W$ is open and $K\subseteq W$.
	Let $(M_n)$ be a compact exhaustion of $W$ with connected compact
        $\O$--convex sets in $W$ such that $\partial M_n$ is piecewise
        differentiable for each $n\in\N$. Since  a compact set in $W$ is
        $\O$--convex in $W$ if and only if it is $\O$--convex in $\Omega$, we 
can take $M=M_n$ with $n\in\N$ sufficiently large so that $M=M_n 
\supseteq K$.
\end{proof}

\subsection{Proof of Theorem \ref{thm:loc_univ_hol_runge} (a)}

The  proof of Theorem \ref{thm:loc_univ_hol_runge} is based on the following theorem:
\begin{thm}\label{thm:deriv_runge}
	Let $\Omega$ be a domain in $\C$, $K$ a $\O$--convex compact set in
        $\Omega$ and $g\in\hol_{\neq0}(K)$.
	Then there exists a sequence $(f_m)\subseteq\hol_{\neq0}(\Omega)$ such that $\limm f_m=g$ uniformly on $K$ and
	\begin{equation}\label{eq:int_cond1}
		\int_{\gamma} f_m(z)dz = \int_{\gamma} g(z)dz
	\end{equation}
	for every closed curve $\gamma\subseteq K$ and every $m\in\N$.
\end{thm}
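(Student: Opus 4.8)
The plan is to build the approximants in a form that makes the nowhere--vanishing condition on all of $\Omega$ automatic, and then to treat the matching of periods as a finite--dimensional correction solved by the implicit function theorem. The point is that ``zero--free on $\Omega$'' is a \emph{multiplicative} constraint, best guaranteed by an exponential ansatz, whereas matching $\int_\gamma g\,dz$ is an \emph{additive} constraint; the construction must reconcile the two. First I would replace $K$ by a convenient set: since $g\in\hol_{\neq0}(K)$ it is zero--free on some open neighborhood $U\supseteq K$, and by a grid argument in the spirit of Proposition \ref{prop:K_connected_Oconvex1} one finds a compact $\O$--convex $L$ with piecewise differentiable boundary and $K\subseteq L\subseteq U$, whose boundary curves $\gamma_1,\dots,\gamma_s$ generate $H_1(L)$. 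Each bounded component $V_j$ of $\C\setminus L$ must meet $\C\setminus\Omega$, for otherwise $\overline{V_j}\subseteq\Omega$ would exhibit a relatively compact component of $\Omega\setminus L$, contradicting $\O$--convexity; hence I fix points $p_j\in V_j\setminus\Omega$, so that each $z-p_j$ is holomorphic and zero--free both on $\Omega$ and near $L$.

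Next I would separate the two difficulties. Choosing integers $n_j$ so that $\tilde g:=g\prod_j (z-p_j)^{-n_j}$ has vanishing logarithmic periods over $\gamma_1,\dots,\gamma_s$ (possible, since these periods lie in $2\pi i\,\Z$ and the holes $V_j$ are dual to a homology basis), the function $h:=\log\tilde g$ is single--valued and holomorphic near $L$. The classical Runge theorem yields $H_m^0\in\hol(\Omega)$ with $H_m^0\to h$ uniformly on $L$, and then
\[
 f_m^0:=e^{H_m^0}\prod_j (z-p_j)^{n_j}\in\hol_{\neq0}(\Omega)
\]
satisfies $f_m^0\to g$ uniformly on $L$, while the exponential factor keeps $f_m^0$ zero--free on \emph{all} of $\Omega$. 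At this stage the periods $\int_{\gamma_i} f_m^0\,dz$ converge to $\int_{\gamma_i} g\,dz$ but need not be equal.

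Finally I would correct the periods without reintroducing zeros, by carrying the correction \emph{inside the exponent}. I would fix $\theta_1,\dots,\theta_s\in\hol(\Omega)$ for which the matrix $\big(\int_{\gamma_i}\theta_k\,g\,dz\big)_{i,k}$ is invertible, and look for a solution of the form $f_m:=e^{H_m^0+\sum_k t_k\theta_k}\prod_j (z-p_j)^{n_j}$, which lies in $\hol_{\neq0}(\Omega)$ for \emph{every} $t\in\C^s$. The $s$ equations $\int_{\gamma_i} f_m\,dz=\int_{\gamma_i} g\,dz$ define a map $\C^s\to\C^s$ whose value at $t=0$ tends to $0$ and whose Jacobian tends to the invertible matrix above (because $f_m^0\to g$); the implicit function theorem therefore gives, for all large $m$, a solution $t=t(m)\to 0$. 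Then $f_m\to g$ uniformly on $K$, and the periods agree on the generators $\gamma_i$, hence on \emph{every} closed curve in $K$: indeed $f_m-g$ is holomorphic near $K$, so its periods depend only on the homology class of the curve.

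The main obstacle is exactly this tension between an exact additive period condition and global zero--freeness: a naive additive fix, e.g.\ adding terms $a_j/(z-p_j)$ to $f_m^0$, matches the periods but can create zeros in $\Omega\setminus K$, where the small correction is no longer small. Routing the additive correction through the exponent via the implicit function theorem is what resolves this. The one genuinely structural input is the invertibility of $\big(\int_{\gamma_i}\theta_k\,g\,dz\big)$, equivalently the surjectivity of $\theta\mapsto\big(\int_{\gamma_i}\theta g\,dz\big)_i$ on $\hol(\Omega)$; I would deduce it from the fact that, $g$ being zero--free and $\hol(\Omega)$ being dense in $\hol(L)$ by Runge, the numbers $\int_\sigma\theta g\,dz$ detect the winding of a cycle $\sigma\subseteq L$ about the holes $p_j$, and so can vanish for all $\theta$ only when $\sigma$ is homologically trivial.
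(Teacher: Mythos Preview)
Your argument is correct and follows the same essential strategy as the paper: reduce to a nice $\O$--convex compact set $L$ with boundary cycles $\gamma_1,\dots,\gamma_s$, pick points $p_j$ in the holes outside $\Omega$, build a base zero--free approximation, and then correct the periods by multiplying with $\exp\big(\sum t_k\theta_k\big)$ and solving the $s$ period equations via an implicit/inverse function argument, using invertibility of $\big(\int_{\gamma_i}\theta_k g\,dz\big)_{i,k}$.

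There are two minor differences worth noting. First, for the base approximation the paper simply invokes the zero--free Runge theorem (Narasimhan, Theorem 6.3.1) to obtain $g_m\in\hol_{\neq0}(\Omega)$ with $g_m\to g$; you instead prove this step explicitly by absorbing the logarithmic periods into $\prod_j(z-p_j)^{n_j}$, taking a single--valued logarithm $h=\log\tilde g$, and Runge--approximating $h$---a more self--contained route. Second, for the invertibility of the Jacobian the paper makes a concrete choice: it takes $\theta_k=w_{k,\mu}\in\hol(\Omega)$ approximating $1/\big(g(z)(z-p_k)\big)$, so that the matrix $\big(\int_{\gamma_i}\theta_k g\,dz\big)$ is close to $\big(2\pi i\,\delta_{ik}\big)$ and hence manifestly invertible; your abstract surjectivity argument is correct but this explicit choice sidesteps it entirely.
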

\begin{remark}
	Theorem \ref{thm:deriv_runge} is in fact a special case of  Theorem 2 in \cite{majcen2007closed}, where closed holomorphic 1-forms on Stein manifolds are considered.
We give a direct and simpler proof for the one--dimensional situation of
Theorem \ref{thm:deriv_runge}.
\end{remark}
\begin{proof}
	By Proposition \ref{prop:K_connected_Oconvex1}\textit{(a)}
         we may assume that $K$ is connected and $\partial K$ is piecewise differentiable.
	Let $D_1,\dots,D_n$ be the bounded connected components of $\C\setminus K$.
	For $j=1,\dots,n$ choose  $z_j\in D_j\setminus\Omega$ and 
let $\gamma_j$ be a parametrization of the positively oriented boundary $\partial D_j$.
	Then $\operatorname{ind}_{\gamma_k}(z_j)=\delta_{kj}$.
	The connectedness of $K$ implies that
        $\Gamma\coloneqq\bigcup_{k=1}^n\operatorname{tr}(\gamma_k)$ is
        a compact $\O$--convex set in $\Omega$. 
	Since every closed curve in $K$ is homologous to a linear combination
        of the curves $\gamma_1,\ldots, \gamma_n$ with integer coefficients, it suffices  to find a
        sequence $(f_m)\in\hol_{\neq0}(\Omega)$ such that $\limm f_m =g$
        uniformly on $K$ and equation (\ref{eq:int_cond1}) holds for 
        $\gamma =\gamma_k$ for every $k=1,\ldots, n$.
	
\medskip

Now for any $j=1,\dots,n$, Runge's Theorem implies that there is a sequence
$(w_{j,m})_m$ in $\hol(\Omega)$ with 
	\begin{equation*}
		\limm w_{j,m}(z)=\frac{1}{g(z)(z-z_j)}
	\end{equation*}
	uniformly on $\Gamma$.
	In particular, 
	\begin{equation*}
		\limm\left(\int_{\gamma_k} w_{j,m}(z)g(z)dz\right)_{k,j=1,\dots,n} = E_n,
	\end{equation*}
	where $E_n\in\C^{n\times n}$ is the identity matrix.
	Hence we can find a $\mu\in\N$ such that the matrix
	\begin{equation*}
		A\coloneqq \left(\int_{\gamma_k} w_{j,\mu}g(z)dz\right)_{k,j=1,\dots,n}
	\end{equation*}
	is non--singular.

\medskip 	
	By a well--known extension of Runge's Theorem (Theorem 6.3.1 in
        \cite{narasimhan1985complex}) there exists a sequence $(g_m)$ in $\hol_{\neq0}(\Omega)$ such that $\limm g_m=g$ uniformly on $K$.
	Consider the functions
	\begin{eqnarray}
		\psi_{k}\colon\C^n\rightarrow\C,   	& &  (s_1,\dots,s_n)\mapsto\int_{\gamma_k}\operatorname{exp}\left(\sum_{j=1}^n s_j w_{j,\mu}(z)\right)g(z)\,dz,\nonumber\\
		\psi_{k,m}\colon\C^n\rightarrow\C,	& &  (s_1,\dots,s_n)\mapsto\int_{\gamma_k}\operatorname{exp}\left(\sum_{j=1}^n s_j w_{j,\mu}(z)\right)g_m(z)\,dz,\nonumber
	\end{eqnarray}
	and the entire functions $\psi,\psi_m\colon\C^n\rightarrow\C^n$ defined by $\psi(s)\coloneqq(\psi_{1}(s),\dots,\psi_n(s))$ and $\psi_m(s)\coloneqq(\psi_{1,m}(s),\dots,\psi_{n,m}(s))$.
	Then $\limm\psi_m=\psi$ locally uniformly on $\C^n$ and $D\psi(0) = A$ is non--singular.
	Hence there exists a sequence $(s_m)=(s_{1,m},\dots,s_{n,m})$ in $\C^n$ with $\limm s_m = 0$ and $\psi_m(s_m)=\psi(0)$.
	This concludes the proof with
	\begin{equation*}
	f_m(z) = \operatorname{exp}\left(\sum_{j=1}^n s_{j,m} w_{j,\mu}(z)\right)g_m(z).
	\end{equation*}
\end{proof}
\begin{proof}[Proof of Theorem
  \ref{thm:loc_univ_hol_runge}(a)] 
	By Proposition\ref{prop:K_connected_Oconvex1}\textit{(b)} we may
        assume that $f \in \hol_{l.u.}(M)$ for some  connected $\O$--convex compact set $M$
        of $\Omega$ with smooth boundary and such that $K\subseteq M^{\circ}$. Hence we can apply 
       Theorem \ref{thm:deriv_runge} to $f'\in \hol_{\not=0}(M)$, so there exists a sequence $(g_n)\subseteq\hol_{\neq0}(\Omega)$ with $\limn g_n =f^{\prime}$ uniformly on $M$ and
	\begin{equation*}
	\int_{\gamma} g_n(z)\, dz = \int_{\gamma} f^{\prime}(z)\,dz = 0
	\end{equation*}
	for every closed curve $\gamma$ in $M$.
            
\medskip 
	Now we choose a compact exhaustion
        $(K_k)_{k}$  of $\Omega$ by connected
        $\O$--convex sets in $\Omega$ with smooth boundaries and such that $K_1=M$.
     Suppose we have fixed arbitrary numbers $\varepsilon>0,\ k\in\N$ and a function $h\in\hol_{\neq0}(\Omega)$ with $\int_{\gamma} h(z)dz = 0$ for every closed curve  $\gamma$ in $K_k$.
         Then by \cite[Lemma 4]{gunning1967immersion} there exists a function $v\in \hol(\Omega)$ with $\|v\|_{K_k}<\varepsilon$ and $\int_{\gamma} e^{v(z)} h(z) dz = 0$ for every closed curve $\gamma$ in $K_{k+1}$.
	 From this fact and an obvious induction argument, we can deduce that
         there exists a sequence $(v_{n,k})_k$ in $\hol(\Omega)$ with $\|v_{n,k}\|_{K_k} < \frac{1}{2^k n}$ and such that for every closed curve $\gamma$ in $K_k$ we have
    \begin{equation*}
       	\int_{\gamma}\exp\left(\sum_{j=1}^{k} v_{n,j}(z)\right) g_n(z)\, dz = 0.
    \end{equation*}
    We define a holomorphic function $w_n\in\hol(\Omega)$ by
    \begin{equation*}
        w_n(z)\coloneqq\sum_{j=1}^\infty v_{n,j}(z).
    \end{equation*}
     Clearly we have $\|w_n\|_K<\frac{1}{n}$ and 
	\begin{equation*}
		\int_{\gamma}e^{w_n(z)}g_n(z)dz = 0
	\end{equation*}
	for every closed curve $\gamma$ in $\Omega$.
	This means that for fixed $z_0\in K$ and for
	 each $n$ there is an anti--derivative $G_n \in \hol(\Omega)$ of
         $e^{w_n} g_n$ with $G_n(z_0)=f(z_0)$. By construction, $G_n\in\hol_{l.u.}(\Omega)$.
	Since $M$ is connected and $\limn G_n^{\prime}=f^{\prime}$ uniformly
        on $M$ we conclude $\limn G_n=f$ uniformly on $M$ and hence on $K$.
\end{proof}

\hide{
Let $\Omega$ be a domain in $\C$ and let $\hol_{\neq0}(\Omega)$ be the set of all zero--free functions in $\hol(\Omega)$.
For a compact set $K$ in $\C$ we write $f\in\hol_{\neq0}(K)$ if there exists an open neighborhood $U$ of $K$ such that $f\in\hol_{\neq0}(U)$.
Let $M\subseteq\C$.
Then $\overline{M}$ and $M^{\circ}$ denote the closure and the interior of $M$.
For a curve $\gamma$ in $\C$ we let $\operatorname{tr}(\gamma)$ be the trace of $\gamma$.
If $\gamma$ is closed and $z\in\C\setminus\operatorname{tr}(\gamma)$ we let $\operatorname{ind}_{\gamma}(z)$ be the winding number of $\gamma$ around $z$.
The following simple topological observation will be useful at several occasions in this section.
\begin{prop}\label{prop:K_connected_Oconvex1}
	Let $\Omega$ be a domain in $\C$, $K$ a compact $\O$--convex set in
        $\Omega$  and $\varepsilon>0$.
\begin{enumerate}[(a)]
\item 
	Suppose $f\in\hol_{\neq0}(K)$.
	Then there exists a connected compact $\O$--convex set $M$ in $\Omega$
        with piecewise differentiable boundary $\partial M$ such that $K\subseteq M$ and a function $g\in\hol_{\neq0}(M)$ with $\|f-g\|_K<\varepsilon$.
\item 
	Suppose $f\in\hol_{l.u.}(K)$ (resp.~$f\in\M_{l.u.}(K)$).
	Then there exists a compact $\O$--convex set $M\subseteq\Omega$ such that $M^{\circ}$ is connected, $K\subseteq M^{\circ}$ and a function $g\in\hol_{l.u.}(M)$ (resp.~$g\in\M_{l.u.}(K)$) with $\|f-g\|_K<\varepsilon$ (resp.~$\chi_K(f,g)<\varepsilon$).
\end{enumerate}
\end{prop}
\begin{proof}
	We only proof part \textit{(a)}, the proof of part \textit{(b)} is similar.
	By the theorems of Runge's and Hurwitz, there exists a rational function $g\in\hol(\Omega)\cap\hol_{\neq0}(K)$ such that $\|f-g\|_K<\varepsilon$.
	Let $z_1,\dots,z_n$ be the zeros of $g$ in $\Omega$.
	Since $K$ is $\O$--convex, there exist paths $\gamma_i\colon[0,1)\rightarrow\Omega\setminus K$ with $\gamma_i(0)=z_i$, $\gamma_i(t)\rightarrow\partial\Omega$ for $t\rightarrow 1$ and such that $W\coloneqq \Omega\setminus(\operatorname{tr}(\gamma_1)\cup\cdots\cup\operatorname{tr}(\gamma_n))$ is connected.
	Note that $W$ is open and $K\subseteq W$.
	A compact set $L\subset W$ is $\O$--convex relative to $W$ if and only if it is $\O$--convex in $\Omega$.
	Let $(M_n)$ be a compact exhaustion of $W$ with connected, $\O$--convex sets such that $\partial M_n$ is piecewise differentiable for each $n\in\N$.
	We can take $M=M_n$ for a suitable $n\in\N$.
\end{proof}
The key step in the proof of Theorem \ref{thm:loc_univ_hol_runge} is contained in the following theorem:
\begin{thm}\label{thm:deriv_runge}
	Let $\Omega\subseteq\C$ be a domain and $K\subseteq\Omega$ a $\O$--convex compact set and $g\in\hol_{\neq0}(K)$.
	Then there exists a sequence $(f_m)\subseteq\hol_{\neq0}(\Omega)$ such that $\limm f_m=g$ uniformly on $K$ and
	\begin{equation}\label{eq:int_cond1}
		\int_{\gamma} f_m(z)dz = \int_{\gamma} g(z)dz
	\end{equation}
	for every closed curve $\gamma\subseteq K$ and every $m\in\N$.
\end{thm}
\begin{remark}
	Theorem \ref{thm:deriv_runge} is a special case of Theorem 2 in \cite{majcen2007closed}, where closed holomorphic 1-forms on Stein manifolds are considered.
	However, we felt that a (simpler) proof for our situation would be of interest of its own.
\end{remark}
\begin{proof}
	By Proposition \ref{prop:K_connected_Oconvex1} \textit{(a)} we may assume that $K$ is connected and $\partial K$ is piecewise differentiable.
	Let $D_1,\dots,D_n$ be the bounded connected components of $\C\setminus K$ and choose $z_i\in D_i\setminus\Omega$.
	For $i=1,\dots,n$ let $\gamma_i$ be a parametrization of the positively orientated boundary $\partial D_i$.
	Then $\operatorname{ind}_{\gamma_i}(z_j)=\delta_{ij}$.
	The connectedness of $K$ implies that $\Gamma\coloneqq\bigcup_{i=1}^n\operatorname{tr}(\gamma_i)$ is $\O$--convex in $\Omega$. 
	Since every closed curve in $K$ is homologous to a linear combination of $\gamma_i$ with coefficients in $\Z$, it is enough to find a sequence $(f_m)\in\hol_{\neq0}(\Omega)$ such that $\limm f_m =g$ uniformly on $K$ and equation (\ref{eq:int_cond1}) holds for every $\gamma_i$.
	
	For $j=1,\dots,n$, by Runge's Theorem, there are sequences $(w_{m}^{(j)})_m\subset\hol(\Omega)$ with 
	\begin{equation*}
		\limm w_{m}^{(j)}(z)=\frac{1}{g(z)(z-z_j)}
	\end{equation*}
	uniformly on $\Gamma$.
	Then
	\begin{equation*}
		\limm\left(\int_{\gamma_i} w_{m}^{(j)}(z)g(z)dz\right)_{i,j=1,\dots,n} = E_n,
	\end{equation*}
	where $E_n\in\C^{n\times n}$ is the identity matrix.
	Hence we can fix $\mu\in\N$, such that the matrix
	\begin{equation}
		A\coloneqq \left(\int_{\gamma_i} w_{\mu}^{(j)}g(z)dz\right)_{i,j=1,\dots,n}
	\end{equation}
	is non--singular.
	
	By a version of Runge's Theorem (Theorem 6.3.1 in \cite{narasimhan1985complex}), there exists a sequence $(g_m)\subseteq\hol_{\neq0}(\Omega)$ such that $\limm g_m=g$ uniformly on $K$.
	Consider the functions
	\begin{eqnarray}
		\psi^{(i)}\colon\C^n\rightarrow\C,   	& &  (s_1,\dots,s_n)\mapsto\int_{\gamma_i}\operatorname{exp}\left(\sum_{j=1}^n s_j w_{\mu}^{(j)}(z)\right)g(z)\,dz,\nonumber\\
		\psi_{m}^{(i)}\colon\C^n\rightarrow\C,	& &  (s_1,\dots,s_n)\mapsto\int_{\gamma_i}\operatorname{exp}\left(\sum_{j=1}^n s_j w_{\mu}^{(j)}(z)\right)g_m(z)\,dz,\nonumber
	\end{eqnarray}
	and the entire functions $\psi,\psi_m\colon\C^n\rightarrow\C^n$ defined by $\psi(s)\coloneqq(\psi^{(1)}(s),\dots,\psi^{(n)}(s))$ and $\psi_m(s)\coloneqq(\psi_{m}^{(1)}(s),\dots,\psi_{m}^{(n)}(s))$.
	Then $\limm\psi_m=\psi$ locally uniformly on $\C^n$ and $D\psi(0) = A$ is non--singular.
	Hence there exists a sequence $(s_m)=(s_m^{(1)},\dots,s_m^{(n)})\in\C^n$ with $\limm s_m = 0$ and $\psi_m(s_m)=\psi(0)$.
	This concludes the proof with
	\begin{equation*}
	f_m = \operatorname{exp}\left(\sum_{j=1}^n s_m^{(j)} w_{\mu}^{(j)}(z)\right)g_m(z).
	\end{equation*}
\end{proof}
\begin{proof}[Proof of Theorem \ref{thm:loc_univ_hol_runge}]
	By Proposition \ref{prop:K_connected_Oconvex1} \textit{(b)} we may assume that $f$ is defined on a connected $\O$-convex compact set $M\subseteq \Omega$ such that $K\subseteq M^{\circ}$.
	
	By Theorem \ref{thm:deriv_runge} there exists a sequence $(g_n)\subseteq\hol_{\neq0}(\Omega)$ with $\limn g_n =f^{\prime}$ uniformly on $M$ and
	\begin{equation}
	\int_{\gamma} g_n(z)dz = \int_{\gamma} f^{\prime}(z)dz = 0
	\end{equation}
	for every closed curve $\gamma$ in $M$.
	Choose a compact exhaustion $(K_n)_n$ of $\Omega$ with $\O$--convex sets and $K_1=M$.
	By Lemma 4 in \cite{gunning1967immersion} and an obvious induction argument \marginpar{\textcolor{red}{explicit}}
	there exist functions $w_n\in\hol(\Omega)$ with $\|w_n\|_K<\frac{1}{n}$ and 
	\begin{equation}
		\int_{\gamma}e^{w_n(z)}g_n(z)dz = 0
	\end{equation}
	for every closed curve $\gamma$ in $\Omega$.
	Fix $z_0\in K$.
	For each $n$ let $G_n\in\hol_{l.u.}(\Omega)$ be the anti--derivative of $g_n$ with $G_n(z_0)=f(z_0)$.
	Since $M$ is connected and $\limn G_n^{\prime}=f^{\prime}$ uniformly on $M$ we conclude $\limn G_n=f$ uniformly on $K$.
\end{proof}}

\subsection{Proof of Theorem \ref{thm:loc_univ_mer_runge} (b)}
%

By Proposition \ref{prop:K_connected_Oconvex1} \textit{(b)} we may assume that
$f \in \M_{l.u.}(M)$ for some compact $\O$--convex set $M$ in $\C$ whose interior
$G:=M^{\circ}$ is  connected and contains $K$. 
Since $f$ is locally univalent in a neighborhood of $M$, its Schwarzian derivative $S_f$
is holomorphic there, so $S_f \in \hol(M)$. According to some basic facts about
complex differential equations, see e.g.~\cite[Theorem 6.1]{laine1993nevanlinna}, we can
recover $f$ from $S_f$ by writing $f$ as the quotient, 
$$ f=\frac{u_1}{u_2} \, ,$$
of two linearly independent solutions $u_1,u_2\in\hol(\Omega)$ of
the homogeneous linear differential equation
\begin{equation}\label{eq:con_schwarzian}
w^{\prime\prime} +\frac{1}{2}S_f\cdot w = 0 \, .
\end{equation}
Since $S_f \in \hol(M)$ and $\C \setminus M$ has no bounded components, the
classical Runge theorem shows that  there exist polynomials
$p_n\colon\C\rightarrow\C$ such that
\begin{equation*}
 p_n  \to S_f\qquad\text{uniformly on }M.
\end{equation*}
We now consider  the homogeneous linear differential equations corresponding to
the polynomials $p_n$.
Fix $z_0\in G$ with $u_2(z_0)\neq 0$,
and let $v_n\in \hol(\C)$ be the unique solution of the initial value problem
\begin{equation*}
v_n^{\prime\prime} + \frac{1}{2}p_n\cdot v_n= 0,\qquad
v_n(z_0)=u_1(z_0),\quad v_n^{\prime}(z_0) =u_1^{\prime}(z_0) \, .
\end{equation*}
Then we clearly have
\begin{equation*}
v_n(z) = u_1(z_0)+ u_1^{\prime}(z_0)(z-z_0)-\frac{1}{2}\int_{z_0}^{z}\,(z-\xi)
p_n(\xi)v_n(\xi)\,d\xi \, , \quad z \in \C \,.
\end{equation*}
Hence a standard application of Gronwall's lemma
(cf.~\cite[Lemma 5.10]{laine1993nevanlinna})
  shows that the sequence
$(v_n)$ is locally bounded in $G$. We are therefore in a position to apply
 Montel's theorem and conclude that $\{v_n \, : \, n \in \N\}$ is a normal
 family. Clearly, every subsequential limit function  $v \in \hol(G)$ of $(v_n)$ is a
 solution of  (\ref{eq:con_schwarzian}) with $v(z_0)=u_1(z_0)$ and
 $v^{\prime}(z_0)=u_1^{\prime}(z_0)$ in $G$. By uniqueness of this solution,
 we conclude $v=u_1$. Consequently, we have
$$ v_n \to  u_1  \quad \text{ locally uniformly in } G \, .$$
For the unique solution $w_n \in \hol(\C)$ of the initial value problem
\begin{equation*}
w_n^{\prime\prime} + \frac{1}{2}p_n\cdot w_n =
0,\qquad 
w_n(z_0)=u_2(z_0),\quad w_n^{\prime}(z_0) = u_2^{\prime}(z_0) \, ,
\end{equation*}
we arrive in a similar way at 
$$ w_n \to  u_2 \quad \text{ locally uniformly in } G \, . $$
We claim that $v_n$ and $w_n$ are linearly independent for large $n$.
For this purpose we consider the Wronskian
$$ W(h,g)=h g'- h' g \in \hol(G) \quad \text{ for } g,h \in \hol(G) \, .$$
 Since $u_1$ and $u_2$ are  solutions of the linear differential equation
 (\ref{eq:con_schwarzian}), there is a constant $\lambda \in \C$ such that
 $W(u_1,u_2)(z)=\lambda$ for all $z \in G$, see
 \cite[Proposition 1.4.8]{laine1993nevanlinna}. In a similar way, we see that
for each $n \in \N$ there is $\lambda_n \in  \C$ such that
$W(v_n,w_n)(z)=\lambda_n$ for all $z \in \C$. By what we have already shown,
$\lambda_n \to \lambda$ as $n \to \infty$. Since $u_1$ and $u_2$ are
linearly independent, we have $\lambda\not=0$, see \cite[Proposition
1.4.2]{laine1993nevanlinna}. Hence $\lambda_n\not=0$, so $v_n$ and $w_n$ are
linearly independent for all but finitely many
$n \in \N$. 

\medskip

We can therefore apply Theorem 6.1 in \cite{laine1993nevanlinna} which implies
that
$$ g_n:=\frac{v_n}{w_n} \in \M_{l.u.}(\C) \, .$$
Since $v_n \to u_1$ and $w_n \to u_2$ locally uniformly in $G$, we see that
$g_n \to u_1/u_2=f$ locally uniformly in $G$ w.r.t.~the chordal metric, so in
particular $\chi$--uniformly on $K$.\vspace*{0mm}\hfill\qedsymbol

\subsection{Proof of Proposition \ref{prop:run-away}}
	Suppose that $u\in\hol_{l.u.}(\Omega)$ is $\Phi$--universal in
        $\hol_{l.u.}(\Omega)$, and let 
        $K$ be a compact set in $\Omega$. Choose  a compact set 
	$L$ in $\Omega$ which contains $K$ in its interior and which is the
        closure of its interior.
	Let 
$$\delta\coloneqq \frac{1}{2}\dist(K,\partial L)>0 \, , \quad M:=\sup_{z\in L} |z| \, .$$ 
Then  $f(z)\coloneqq z+2M+2\delta$ belongs to $\hol_{l.u.}(\Omega)$.
	Since $u$ is $\Phi$--universal in $\hol_{l.u.}(\Omega)$ there exists $\phi\in \Phi$ such that
	\begin{equation}\label{eq:un1}
		\|u\circ\phi-f\|_L<\delta.
	\end{equation}
	This in particular implies $|u(\phi(z))|\geq |f(z)|-\delta \geq
        M+\delta$ for all $z\in K$, so
$$ \min \limits_{z \in K} |u(\phi(z))|\ge M+\delta >M \ge \max \limits_{\phi(z) \in
  K} |\phi(z)| \, ,$$
and thus $\phi(K)\cap K=\emptyset$. Next, we fix $z_0 \in K$.  
	Then the estimate  (\ref{eq:un1})  shows that for every
        $z\in\partial L$ we have
	\begin{equation*}
		\big|[u(\phi(z_0))-u(\phi(z))]-[z_0-z]\big|<2 \delta
                \le|z_0-z| \,. 
	\end{equation*}
	Hence, by Rouch\'e's theorem, $u(\phi(z_0))-u(\phi(z))$ and $z_0-z$ have
        the same numbers of zeros in $L^{\circ}$. This implies that $\phi$ is
        injective on $K$.

\medskip
	Finally let $(K_n)$ be a compact exhaustion of $\Omega$.
	We can apply the reasoning above to each $K_n$ to obtain a run--away
        and eventually injective sequence in $\Phi$.

\subsection{Proofs of Theorems \ref{thm:con_univ}  and  
  \ref{thm:mer_loc_univ_birk}, and Corollary \ref{cor}} 

We are going to apply a fairly standard universality criterion.
Let $\mathcal{T}$ be a collection of
continuous self--maps of a topological space $X$. We say that $\mathcal{T}$
acts transitively on $X$ if for every pair of open sets $U$ and $V$ in $X$ there is
an $\tau \in\mathcal{T}$ such that $\tau(U) \cap V\not=\emptyset$.
An element $u\in X$ is called universal for $\mathcal{T}$ if the orbit $\{\tau
(u)\,:\,\tau\in\mathcal{T}\}$ is dense in $X$. 
\begin{thm}[Birkhoff transitivity criterion]\label{thm:univ_crit}
	Let $X$ be a second countable Baire--space and $\mathcal{T}$ a family
        of continuous self--maps of $X$. Suppose that $\mathcal{T}$ acts
        transitively on $X$. Then there exists an universal element for
        $\mathcal{T}$ and the set of all universal elements for $\mathcal{T}$ is a dense $G_{\delta}$--subset of $X$.
\end{thm}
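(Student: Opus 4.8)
The plan is to recognize this as the classical Birkhoff transitivity theorem and to prove it via the Baire category theorem applied to a suitable countable intersection. First I would use second countability to fix a countable base $(V_k)_{k\in\N}$ of nonempty open subsets of $X$. The central observation is that $u\in X$ is universal for $\mathcal{T}$ precisely when its orbit $\{\tau(u):\tau\in\mathcal{T}\}$ meets every nonempty open set, and -- since the $V_k$ form a base -- this is equivalent to the orbit meeting each $V_k$. Rewriting the condition ``$\tau(u)\in V_k$ for some $\tau$'' as ``$u\in\tau^{-1}(V_k)$ for some $\tau$'', I would conclude that the set of universal elements equals
$$ U:=\bigcap_{k\in\N}\;\bigcup_{\tau\in\mathcal{T}}\tau^{-1}(V_k) \, . $$
The only care needed at this step is to use that the $V_k$ genuinely form a base, so that meeting every $V_k$ really forces the orbit to be dense in $X$.

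Next I would show that each set $O_k:=\bigcup_{\tau\in\mathcal{T}}\tau^{-1}(V_k)$ is open and dense. Openness is immediate: each $\tau$ is continuous, so $\tau^{-1}(V_k)$ is open, and an arbitrary union of open sets is open. For denseness I would invoke transitivity: given any nonempty open $W\subseteq X$, applying the transitivity hypothesis to the pair $W$ and $V_k$ yields some $\tau\in\mathcal{T}$ with $\tau(W)\cap V_k\neq\emptyset$; choosing $w\in W$ with $\tau(w)\in V_k$ gives $w\in W\cap\tau^{-1}(V_k)$, so $W$ meets $O_k$. Since $W$ was arbitrary, $O_k$ is dense.

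Finally, as $X$ is a Baire space, the countable intersection $U=\bigcap_k O_k$ of dense open sets is itself a dense $G_\delta$--subset of $X$; in particular $U\neq\emptyset$, so a universal element exists, and by the first step $U$ is exactly the set of all such elements. I do not anticipate a genuine obstacle: the argument is a routine assembly of second countability (to reduce denseness of the orbit to the countably many basic sets $V_k$), continuity (for openness of the $O_k$), transitivity (for their density), and Baire's theorem (to intersect them). The mildest subtlety is the bookkeeping in the equivalence ``$u$ universal $\iff u\in U$''.
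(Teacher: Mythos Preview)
Your argument is correct and is exactly the standard proof of the Birkhoff transitivity theorem. The paper itself does not supply a proof of this statement at all; it simply cites \cite[Theorem 1]{erdmann1999universal}, so there is no approach to compare against beyond noting that your write-up fills in what the paper leaves to the literature.
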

For a proof see for instance  \cite[Theorem 1]{erdmann1999universal}.
For later use, we note that if $X$ is a separable metric space, then a
collection $\mathcal{T}$ of continuous self--maps of $X$  acts 
transitively on $X$ if and only if for every pair of points $v$ and $w$ in $X$ there exist a sequence $(v_n)$ in $X$ and a sequence $(\tau_n)$ in $\mathcal{T}$ such that $v_n\rightarrow v$ and $\tau_{n}(v_n)\rightarrow w$.

\medskip

We now apply these concepts to investigate universality for holomorphic and
meromorphic functions.
Let $\Omega$ be a domain in $\C$. We associate to any  holomorphic
self--map $\phi$ of $\Omega$ the composition operator
$$ C_{\phi}\colon\hol(\Omega)\rightarrow\hol(\Omega)\, ,\quad f\mapsto f\circ\phi \, .$$ 
If $\phi$ is locally univalent, then $C_{\phi}$ maps $\hol_{l.u.}(\Omega)$ into $\hol_{l.u.}(\Omega)$.
Since the union of $\hol_{l.u.}(\Omega)$ with all constant functions is a complete metric space and $\hol_{l.u.}(\Omega)$ is an open subset of this space, $\hol_{l.u.}(\Omega)$ is a Baire--space.
\begin{proof}[Proof of Theorem \ref{thm:con_univ}:]
In view of Theorem  \ref{thm:univ_crit} it suffices to show that
 the family $\{C_{\phi} \, : \, \phi \in \Phi\}$ acts transitively on $\hol_{l.u.}(\Omega)$.
	Let $f,g\in\hol_{l.u.}(\Omega)$. Since $\Omega$ is simply connected
        there is an exhaustion $(K_n)$ of
        $\Omega$ with compact sets $K_n$ in $\Omega$ such that each $K_n$ has
        connected complement. By assumption there is a sequence $(\phi_n)$ in
        $\Phi$ such that  $\phi_{n}$ is injective on $K_n$ and
        $\phi_{n}(K_n)\cap K_n=\emptyset$ for each $n \in \N$.
	Define $L_n\coloneqq K_n\cup\phi_{n}(K_n)$ and $h_n\in\hol_{l.u.}(L_n)$ by
	\begin{equation}
		h_n(z)\coloneqq
	\begin{cases}
		f(z),& z\in K_n\\
		g(\phi_{n}^{-1}(z)),&z\in \phi_{n}(K_n).
	\end{cases}
	\end{equation}
	Note that each $L_n$ has connected complement, hence by Theorem
        \ref{thm:loc_univ_hol_runge} (a) there exists a function $f_n\in\hol_{l.u.}(\Omega)$ with $\|f_n-h_n\|_{K_n}\leq1/n$.
	This implies $f_n\to f$ and $f_n\circ\phi_{n}\to g$ locally uniformly in $\Omega$.	
\end{proof} 	

The proof of Theorem \ref{thm:mer_loc_univ_birk} is identical except for that
we need to apply part (b) of Theorem \ref{thm:loc_univ_hol_runge} instead of
part (a).

\begin{proof}[Proof of Corollary \ref{cor}]
    By   Theorem \ref{thm:mer_loc_univ_birk} there exists a universal function $f\in\M_{l.u.}(\C)$ such that $T_f$ is dense in $\M_{l.u.}(\C)$.
        Let $\Omega\subseteq\C$ be a simply connected domain.
        Then, as a consequence of Theorem \ref{thm:loc_univ_hol_runge} (b), $\M_{l.u.}(\C)$ is  dense in $\M_{l.u.}(\Omega)$.
        This fact together with the universality of $f$ implies, that $T_f$ is dense in $\M_{l.u.}(\Omega)$.
\end{proof}

\subsection{Proofs of Theorems   \ref{thm:mult_univ},
   \ref{cor:aut_univ}  and \ref{thm:harm_univ}}

Except for  Theorem \ref{thm:harm_univ} (b), 
the proofs are  similar to the proof of Theorem \ref{thm:con_univ}, so we
only indicate the modifications that are necessary.

\begin{proof}[Proof of Theorem \ref{thm:mult_univ}]
We start with an  exhaustion $(K_n)$ of
        $\Omega$ with compact sets $K_n$ in $\Omega$. We can assume that each
        $K_n$ is $\O$--convex in $\Omega$. By hypothesis, there is a sequence
 $(\phi_n)$ in $\Phi$ that has all the properties we need in order to proceed as in the
 proof of Theorem \ref{thm:con_univ}.
\end{proof}

\begin{proof}[Proof of Theorem \ref{cor:aut_univ}]
By Proposition \ref{prop:run-away} we only need to show the ``if''--part.
 Since by hypothesis, 
$\Omega$ is not conformally equivalent to $\C\setminus \{0\}$,
the existence of a run--away sequence in $\aut{\Omega}$ implies that $\Omega$ is either
simply connected  or of infinite connectivity, see the
discussion following Lemma 2.9 in \cite[p.~51--52]{bernal1995universal}.
 In the first case, when $\Omega$ is simply connected, we
can simply apply Theorem \ref{thm:con_univ}. In the second case, when $\Omega$ is of infinite connectivity, we start with an  exhaustion $(K_n)$ of
        $\Omega$ with $\O$--convex compact sets $K_n$ in $\Omega$.
        Since $\phi_n$ is run-away we may assume that
        $\phi_n(K_n) \cap K_n=\emptyset$ for each $n \in \N$. Then, by a key
        observation (see \cite[Lemma 2.12]{bernal1995universal}), 
        it follows that $\phi_n(K) \cup K$ is $\O$--convex in $\Omega$. We are
        thus in a position to apply Theorem \ref{thm:mult_univ}.
\end{proof}

\begin{proof}[Proof of Theorem \ref{thm:harm_univ} (a)]
The proof is identical to the proof of Theorem  \ref{thm:mult_univ} except
that one has to apply a Runge-type theorem for harmonic functions (see for
example Theorem 3 in \cite{gauthier1980uniform} or Theorem 4 in
\cite{gardiner1994superharmonic}) instead of Theorem
\ref{thm:loc_univ_hol_runge} (a).
\end{proof}

\begin{proof}[Proof of Theorem \ref{thm:harm_univ} (b)]
    We show that the collection of continuous maps
$$ \Lambda_0(\Omega) \to \Lambda_0(\Omega) \, , \quad \lambda\mapsto
\phi^*\lambda \, , \qquad \phi \in \Phi \, ,$$
acts transitively on $\Lambda_0(\Omega)$ and then apply Theorem \ref{thm:univ_crit}.
    Let $(K_n)$ be a compact exhaustion of $\Omega$ such that each $K_n$ is $\O$--convex.
    Then by the assumptions on $\Phi$ we can find a sequence $(\phi_n)$ in
    $\Phi$ such that each $\phi_n$ is injective in an open neighborhood of $K_n$, $K_n\cap \phi_{n}(K_n)=\emptyset$ and $L_n\coloneqq K_n\cup\phi_{n}(K_n)$ is $\O$--convex.
    Let $\lambda, \mu \in\Lambda_0(\Omega)$ and define $h_n\colon L_n\rightarrow\R$ by
    \begin{equation*}
        h_n(z)\coloneqq\begin{cases} \log\lambda(z) \, ,& z\in K_n\\ \log
          \left[(\phi_{n}^{-1})^*\mu(z)\right]\, , & z\in \phi_{n}(K_n) \end{cases}
    \end{equation*}
    Since $h_n$ is harmonic in a neighborhood of $L_n$, we can find for each
    $\delta_n>0$ a
harmonic function  $u_n\colon\Omega\rightarrow \R$ such that 
$\|u_n-h_n\|_{L_n}\leq\delta_n$ (see Theorem 4 in \cite{gardiner1994superharmonic}).
 We choose $\delta_n>0$ so small that
    \begin{equation} \label{eq:est}
        \|e^{u_n}-e^{h_n}\|_{L_n}\leq \min \left\{\frac{1}{n},
          \frac{||\phi'_n||_{K_n}}{n} \right\} \, .
    \end{equation}    
    Define $\lambda_n\coloneqq e^{u_n}\in\Lambda_{0}(\Omega)$.
    Then we have $\|\lambda_n-\lambda\|_{K_n}\leq1/n$.
    On the other hand note that $\mu =\phi_{n}^*(e^{h_n})$ on $K_n$, so
    \begin{equation*}
        \left|\left|\phi_{n}^*\lambda_n-\mu\right|\right|_{K_n}=\left|\left|\phi_n^{\prime}
            \cdot  \left(e^{u_n} \circ \phi_n-e^{h_n}\circ
              \phi_n\right)\right|\right|_{K_n}\leq  ||\phi'_n||_{K_n} \cdot
        \|e^{u_n}-e^{h_n}\|_{L_n} \le \frac{1}{n}.
    \end{equation*}
    We conclude $\lambda_n \to \lambda$ and $\phi_{n}^*\lambda_n \to \mu$
    locally uniformly in $\Omega$.
\end{proof}

\subsection{Proof of Theorem \ref{thm:lu_universal_functions}}

We start with a simple observation.
Let $f \in \M(\D)$
and let $0 \in \Omega_f$, i.e.,   $0$ is a non--critical point of $f$.
Then  there is unique universal covering map $\Psi_f$ from $\D$ onto $\Omega_f$
such that  $\Psi_f(0)=0$ and $\Psi_f^{\prime}(0) > 0$. We call $\Psi_f$  the
normalized universal covering of $\Omega_f$. Note that $\Psi_f=\text{id}$ if
$f$ is locally univalent.
If $(f_n)$ is a sequence in $\M(\D)$ which converges locally $\chi$--uniformly in $\D$
to $f$, then $0$ is also a non--critical point of $f_n$ for all but finitely
many $n$, so $0 \in \Omega_{f_n}$.

\begin{prop}\label{prop:convergence_covering}
Let $f \in\mathcal{M}(\D)$ such that $0\in \Omega_f$ and suppose that
$(f_n)_n\subseteq\mathcal{M}(\D)$ converges locally $\chi$--uniformly in $\D$ to $f$.
	Then $f_n\circ\Psi_{f_n} \to f\circ\Psi_f$ locally $\chi$--uniformly in $\D$.
\end{prop}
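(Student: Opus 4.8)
The plan is to reduce the statement to two classical facts: the Carath\'eodory kernel theorem for universal covering maps and the (spherical) Hurwitz theorem. First I would establish that the non-critical domains converge in the sense of Carath\'eodory, $\Omega_{f_n}\to\Omega_f$ with respect to the base point $0$, and then deduce that $\Psi_{f_n}\to\Psi_f$ locally uniformly in $\D$. The convergence $f_n\circ\Psi_{f_n}\to f\circ\Psi_f$ will then follow from a routine triangle-inequality argument for $\chi$.

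For the kernel convergence I would analyse the critical sets $C_{f_n}:=\D\setminus\Omega_{f_n}$, which are discrete in $\D$. Two things have to be checked: (a) every compact $Q\subseteq\Omega_f$ lies in $\Omega_{f_n}$ for all large $n$, and (b) every point of $C_f$ is a limit of points of $C_{f_n}$. For (a) I argue by contradiction: if critical points $z_n\in C_{f_n}$ accumulated at a non-critical point $z_*$ of $f$, then either $f'(z_*)\neq0$ (treated via $f_n'\to f'$ locally uniformly near $z_*$) or $z_*$ is a simple pole of $f$ (treated via $1/f_n\to 1/f$ and $(1/f)'(z_*)\neq0$); in either case $f_n$ is locally injective near $z_*$ for large $n$, contradicting $z_n\in C_{f_n}$. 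For (b) I use Hurwitz at a critical point $w$: finite critical points are caught by $f_n'\to f'$, while a pole of order $m\ge2$ is handled through $1/f_n\to 1/f$, whose derivative $(1/f_n)'\to(1/f)'$ has $m-1$ zeros clustering at $w$, each producing a critical point of $f_n$. Since (a) and (b) are stable under passing to subsequences, they yield $\Omega_{f_n}\to\Omega_f$ in the kernel sense.

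With kernel convergence in hand I would prove $\Psi_{f_n}\to\Psi_f$ by a normal families argument. As $\Omega_{f_n}\subseteq\D$, the $\Psi_{f_n}$ form a normal family; let $\Psi$ be any subsequential limit, so $\Psi(0)=0$. Because $\Omega_f$ contains a fixed disc $D(0,r)$, property (a) forces $D(0,r/2)\subseteq\Omega_{f_n}$ eventually, whence by monotonicity of the hyperbolic density $\Psi_{f_n}'(0)=1/\lambda_{\Omega_{f_n}}(0)\ge r/2$; thus $\Psi$ is non-constant and maps $\D$ into $\D$. Hurwitz together with (b) gives $\Psi(\D)\subseteq\Omega_f$, and the continuity of the hyperbolic density under kernel convergence, $\lambda_{\Omega_{f_n}}(0)\to\lambda_{\Omega_f}(0)$, yields $\lambda_{\Omega_f}(\Psi(0))\,|\Psi'(0)|=\lambda_{\D}(0)$. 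By the rigidity in the Schwarz--Pick lemma this forces $\Psi$ to be a holomorphic covering of $\Omega_f$, and the normalization $\Psi(0)=0$, $\Psi'(0)>0$ identifies it as $\Psi_f$. As every subsequential limit equals $\Psi_f$, the whole sequence converges locally uniformly to $\Psi_f$.

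Finally, for the conclusion I fix a compact $K\subseteq\D$. Since $\Psi_f(K)$ is compact in $\D$ and $\Psi_{f_n}\to\Psi_f$ uniformly on $K$, there is a fixed compact $L\subseteq\D$ with $\Psi_{f_n}(K)\subseteq L$ for large $n$. From $\chi(f_n(\Psi_{f_n}(z)),f(\Psi_f(z)))\le \chi(f_n(\Psi_{f_n}(z)),f(\Psi_{f_n}(z)))+\chi(f(\Psi_{f_n}(z)),f(\Psi_f(z)))$, the first summand tends to $0$ uniformly on $K$ because $f_n\to f$ $\chi$-uniformly on $L$, and the second because $f$ is spherically uniformly continuous on $L$ while $\Psi_{f_n}\to\Psi_f$ uniformly on $K$. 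I expect the main obstacle to be the kernel-convergence step, and within it the verification of (b) at a multiple pole, where the pole may split into several simple poles; the resolution is that the derivative of $1/f_n$ nonetheless produces the requisite critical points clustering at $w$.
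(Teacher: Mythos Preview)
Your proof is correct and follows the same route as the paper: establish that $\Omega_f$ is the Carath\'eodory kernel of $(\Omega_{f_n})$ via Hurwitz--type arguments, deduce $\Psi_{f_n}\to\Psi_f$ locally uniformly, and then compose. The paper compresses the first step into one sentence (``a straightforward application of Hurwitz's theorem'') and handles the second by a direct citation of Hejhal's theorem on universal covering maps for variable regions; you instead spell out the kernel verification in detail (including the multiple--pole case) and reprove Hejhal's result by a normal--families/Schwarz--Pick rigidity argument. One small point: in your rigidity step you invoke ``continuity of the hyperbolic density under kernel convergence, $\lambda_{\Omega_{f_n}}(0)\to\lambda_{\Omega_f}(0)$'', which is itself essentially the derivative--at--$0$ case of the theorem you are proving; the non--circular version is to combine the one--sided bound $\liminf \Psi_{f_n}'(0)\ge \Psi_f'(0)$ (from your property~(a) and monotonicity of the hyperbolic metric) with the Schwarz--Pick inequality $\Psi'(0)\le \Psi_f'(0)$ for the subsequential limit $\Psi:\D\to\Omega_f$, forcing equality and hence the covering property. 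With that tweak the argument is fully self--contained, whereas the paper's proof is shorter but relies on Hejhal as a black box.
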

\begin{proof} A straightforward application of Hurwitz's theorem shows that
  $\Omega_f$ is the kernel of the sequence of domains $\Omega_{f_n}$ (with
  respect to the point $0$), that is, $\Omega_f$ is the largest domain $D$ in
  $\C$ containing $0$ such that each compact subset $K$ of $D$ is contained  in all but finitely
 many of the domains $\Omega_{f_n}$.
	A well-known result of Hejhal \cite{hejhal1974universal} then implies
        $\Psi_{f_n} \to \Psi_f$ locally uniformly in $\D$.
	Since $f_n \to f$ locally $\chi$--uniformly in $\D$, we can deduce
        $f_n\circ\Psi_{f_n}  \to f\circ\Psi_f$ locally $\chi$--uniformly in
        $\D$.
\end{proof}
	
\hide{	Now suppose $f$ is constant say $f\equiv c$.
	For convenience we equip $\D$ with the hyperbolic geometry.
	Let $B_r^h(z)$ be the closed hyperbolic disc with centre $z\in\D$ and radius $r>0$.
	If we fix $0<r$ we have $\Psi_{f_n}(B_r^h(z_0))\subseteq B_r^h(w_0)$ by Schwarz-Pick's Theorem.
	Using the uniform convergence of $(f_n)_n$ on $B_r^h(w_0)$ we obtain
	\begin{equation*}
	\max_{z\in B_r^h(z_0)} |f_n\circ\Psi_{f_n}(z) - c|\leq \max_{z\in B_r^h(w_0)}|f_n(z)- c|\rightarrow 0\quad\text{for }n\rightarrow\infty.
	\end{equation*}
	This holds for any $r>0$, hence $\limn f_n\circ\Psi_{f_n} = f$. 
\end{proof}}	
\begin{proof}[Proof of Theorem \ref{thm:lu_universal_functions}]
Let $G \in \M(\D)$ be a non--constant universal function for $\mathcal{G}$.
By precomposing $G$ with a disk automorphism we may assume that $0$ is not a
critical point of $G$, so $0 \in \Omega_G$.  Let $\Psi_G$ denote the
normalized universal covering of $\Omega_G$. 
\medskip

(i) Let $F:=G \circ \Psi_G \in \M_{l.u.}(\D)$ and $f\in\mathcal{G}_{l.u.}$. We show that there is a sequence $(\phi_n)$
in $\aut{\D}$ such that
 $F \circ \phi_n \to f$ locally
$\chi$--uniformly on $\D$.

\smallskip

In fact, since $G$ is universal for $\mathcal{G}$ and $f \in \mathcal{G}$
there is a sequence $(\alpha_n)$ in $\aut{\D}$ such that $G \circ
\alpha_n \to f$ locally $\chi$--uniformly in $\D$. By our preliminary
discussion we may assume that $0 \in \Omega_{G \circ \alpha_n}$ for every $n
\in \N$. Let $\Psi_{G \circ
  \alpha_n}$ denote the normalized universal covering for $\Omega_{G \circ \alpha_n}$.
By Proposition \ref{prop:convergence_covering}, we see that
\begin{equation} \label{eq:conv0}
 G \circ \alpha_n \circ \Psi_{G \circ \alpha_n} \to f \circ \Psi_f=f \, 
\end{equation}
locally $\chi$--uniformly on $\D$. Now we observe that $\alpha_n \circ \Psi_{G
  \circ \alpha_n}$ is a universal covering map from $\D$ onto $\Omega_G$ since
$\alpha_n(\Omega_{G \circ \alpha_n})=\Omega_G$. This implies that there is
$\phi_n \in \aut{\D}$ such that 
$$ \alpha_n \circ \Psi_{G \circ \alpha_n}=\Psi_G \circ \phi_n \, .$$
Using (\ref{eq:conv0}) we get that
$$ F \circ \phi_n=G \circ \Psi_G \circ \phi_n=G \circ \alpha_n \circ \Psi_{G
  \circ \alpha_n} \to f$$
locally $\chi$--uniformly on $\D$.

\medskip

(ii) Now let $\Psi$ be any universal covering from $\D$ onto $\Omega_G$.
Then $\Psi=\Psi_G \circ T$ for some $T \in \aut{\D}$. Hence, if $f \in
\mathcal{G}_{l.u.}$ then by (i) there is a sequence $(\phi_n)$ in $\aut{\D}$
such that $G \circ \Psi_G \circ \phi_n \to f$ locally $\chi$--uniformly in
$\D$, so
$$ G \circ \Psi \circ \left( T^{-1} \circ \phi_n \right)=G \circ \Psi_G \circ \phi_n \to f$$
locally $\chi$--uniformly in $\D$ with $T^{-1} \circ \phi_n \in \aut{\D}$ for
each $n \in \N$.
\end{proof}

\subsection{Proof of Theorem  \ref{thm:u4} and Theorem \ref{thm:u3} (Case $c<0$)}

We first need to briefly recall the standard way of generating regular conformal metrics $\lambda(z)
\, |dz|$ with
constant curvature $c \in \R$ on a domain $\Omega$ in $\C$. Note that scaling $\lambda(z)
\, |dz|$ by
$|c|$, we get the metric $|c| \lambda(z) \,|dz|$, where 
 $$ c^2 \kappa_{|c|\lambda}=\kappa_{\lambda} \, .$$
In what follows we might therefore restrict ourselves without loss of generality to the normalized cases $c \in
\{-1,0,+1\}$. For these cases the canonical constantly curved metrics are
given by 
$$ \lambda_{D_c}(z)  \, |dz|:=\begin{cases}
\displaystyle \frac{2}{1-|z|^2} \, |dz| &  D_c=\D \text{ and } c=-1
\quad \text{ (hyperbolic case)\,,}\\[4mm]
\quad 1  \, |dz|  \hspace*{1.2cm}  \text{ if } & D_c=\C \text{ and } 
c=\phantom{+}0 \quad \text{ (Euclidean case)\,,} \\[2mm] 
\displaystyle \frac{2}{1+|z|^2} \, |dz| &
D_c=\hat{\C} \text{ and } c=+1\quad \text{ (spherical case\footnotemark)\,.}  
\end{cases}$$

\footnotetext{Here, we  have to consider local coordinates.}
Then for any $f \in \M_{l.u.}(\Omega)$ with $f(\Omega)
\subseteq D_c$ the pullback of $\lambda_{D_c}(z) \, |dz|$ by $f$ provides us
with a 
conformal metric $f^*\lambda_{D_c}(z) \, |dz|$ on $\Omega$ with constant
curvature $c$. Liouville \cite{liouville1853equation}  discovered that the converse
statement holds  for \textit{simply connected} domains:

\begin{thm}[Liouville]\label{thm:liouville}
 	Let $\Omega$ be a simply connected domain in $\C$ and $c \in\{-1,0,1\}$.
        Then for any  $\lambda \in\Lambda_{c}(\Omega)$.
 	there exists a function $f \in \M_{l.u.}(\Omega)$ with
        $f(\Omega)\subseteq D_c$ such that  $$\lambda  = f^*\lambda_{D_c} \, .$$
In addition, $f$ is uniquely determined by $\lambda$ up to postcomposition with
a holomorphic rigid motion $T$ of $D_c$.
\end{thm}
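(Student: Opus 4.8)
The plan is to invert the pullback construction by building a \emph{developing map}: a single locally univalent $f$ with $f(\Omega)\subseteq D_c$ whose pullback reproduces the prescribed metric. The bridge between a metric and its developing map is a holomorphic invariant. Writing $\lambda=e^{u}$, the curvature hypothesis $\kappa_\lambda\equiv c$ reads $\Delta u=-c\,e^{2u}$, i.e. $u_{z\bar z}=-\tfrac{c}{4}\lambda^2$. I would first introduce
\[ b_\lambda:=(\log\lambda)_{zz}-\big((\log\lambda)_z\big)^2 \]
and check, using precisely this PDE, that $\partial_{\bar z}b_\lambda=0$, so $b_\lambda$ is holomorphic on $\Omega$. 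A short computation also yields the transformation rule $b_{g^*\mu}=(b_\mu\circ g)(g')^2+\tfrac12 S_g$ for locally univalent $g$, and one checks directly that $b_{\lambda_{D_c}}\equiv 0$ for each of the three model metrics. Consequently any developing map must satisfy $S_f=2b_\lambda$, which \emph{reduces the whole problem to solving a Schwarzian equation} with prescribed holomorphic right-hand side.

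Second, I would construct such an $f$ exactly as in the proof of Theorem~\ref{thm:loc_univ_mer_runge}(b): since $\Omega$ is simply connected and $b_\lambda$ is holomorphic, the linear equation $w''+b_\lambda w=0$ has two linearly independent holomorphic solutions $u_1,u_2$ on $\Omega$ with constant nonzero Wronskian $W=u_1u_2'-u_1'u_2$, and $f:=u_1/u_2\in\M_{l.u.}(\Omega)$ satisfies $S_f=2b_\lambda$. Using $f'=-W/u_2^2$ one computes the pullbacks explicitly,
\[ f^*\lambda_\D=\frac{2|W|}{|u_2|^2-|u_1|^2}\quad(c=-1),\qquad f^*\lambda_{\hat\C}=\frac{2|W|}{|u_2|^2+|u_1|^2}\quad(c=+1). \]
The case $c=0$ is easier and bypasses the ODE: since $\log\lambda$ is harmonic on the simply connected $\Omega$, it has a holomorphic primitive $F$, and $f:=\int e^{F}$ gives $|f'|=\lambda=f^*\lambda_\C$ at once. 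Replacing the basis $(u_1,u_2)$ by $A\cdot(u_1,u_2)$ for $A\in GL_2(\C)$ postcomposes $f$ with a M\"obius transformation and transforms the Hermitian expression in the denominator by $A$; thus $f$ is so far determined only up to the full M\"obius group.

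The \textbf{main obstacle} is the remaining normalization: to choose the solution basis so that $f$ actually maps into $D_c$ and $f^*\lambda_{D_c}=\lambda$ \emph{on the nose}, i.e. so that the denominator above equals $2|W|/\lambda$ with the correct sign. Conceptually, the metric $\lambda$ equips the two-dimensional solution space of $w''+b_\lambda w=0$ with a canonical Hermitian form whose signature is forced by $c$ --- indefinite of type $(1,1)$ when $c=-1$ (so that $\{|u_1|<|u_2|\}$ is nonempty and $f$ lands in the unit disk) and positive definite when $c=+1$; diagonalizing this form supplies the sought basis. Equivalently, I would argue locally-to-globally: the classical local Liouville theorem produces a developing map in a neighborhood of each point, any two such local maps differ by a holomorphic rigid motion of $D_c$ (by the uniqueness below), and analytic continuation defines a monodromy homomorphism $\pi_1(\Omega)\to\mathrm{Isom}(D_c)$; since $\Omega$ is simply connected this monodromy is trivial and the local developing maps glue to a global $f\colon\Omega\to D_c$ with $f^*\lambda_{D_c}=\lambda$. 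This is the analytic heart of the proof and is exactly where simple connectivity is indispensable --- which is why Liouville's representation cannot be expected on multiply connected domains.

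Finally, uniqueness is clean and I would establish it first to support the gluing step. If $f_1^*\lambda_{D_c}=f_2^*\lambda_{D_c}=\lambda$, then the transformation rule gives $S_{f_1}=2b_\lambda=S_{f_2}$, so $f_2=M\circ f_1$ for some M\"obius $M$. Pulling back, $f_1^*(M^*\lambda_{D_c})=f_1^*\lambda_{D_c}$, and since $f_1$ is locally univalent its image is open and $f_1$ is locally invertible; hence $M^*\lambda_{D_c}=\lambda_{D_c}$ on a nonempty open subset of $D_c$, and by real-analyticity on all of $D_c$. Thus $M$ is a holomorphic isometry, i.e. a holomorphic rigid motion $T$ of $D_c$, which is precisely the asserted ambiguity.
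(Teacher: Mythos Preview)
The paper does not prove Theorem~\ref{thm:liouville} at all: it is stated as a classical result and attributed to Liouville~\cite{liouville1853equation}, and the paper then immediately uses it as a black box in Proposition~\ref{thm:liouvilleuniversal}. So there is no ``paper's own proof'' to compare against.

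That said, your outline is essentially the standard modern proof of Liouville's representation theorem, and the computations you sketch are correct. The invariant $b_\lambda=(\log\lambda)_{zz}-((\log\lambda)_z)^2$ does satisfy $\partial_{\bar z}b_\lambda=0$ under the curvature equation, the transformation law $b_{g^*\mu}=(b_\mu\circ g)(g')^2+\tfrac12 S_g$ holds, and $b_{\lambda_{D_c}}\equiv 0$ in all three model cases; hence any developing map must satisfy $S_f=2b_\lambda$, and the quotient $f=u_1/u_2$ of a basis of solutions of $w''+b_\lambda w=0$ does the job up to a M\"obius postcomposition. Your identification of the normalization step as the genuine content is accurate: the local Liouville theorem plus trivial monodromy on a simply connected domain is the cleanest way to pin down the correct $GL_2(\C)$-coset, and your uniqueness argument via $S_{f_1}=S_{f_2}$ followed by the isometry property of the intertwining M\"obius map is correct (with the minor caveat that the real-analytic continuation of $M^*\lambda_{D_c}=\lambda_{D_c}$ should be carried out on the appropriate chart of $D_c$). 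One small wording issue: in the $c=0$ case you write ``holomorphic primitive'' for $F$ when you mean a holomorphic function with $\mathrm{Re}\,F=\log\lambda$; the integration $f=\int e^F$ then indeed gives $|f'|=\lambda$.
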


Recall that the holomorphic rigid motions of $D_c$ are
\begin{itemize}
\item[(a)] the conformal automorphisms of $\D$ for $c=-1$; 
\item[(b)] the direct Euclidean motions of $\C$ (i.e., the maps $z \mapsto z+b$, $b \in \C$) for $c=0$;
\item[(c)] the rotations of the Riemann sphere $\hat{\C}$ for $c=+1$.
\end{itemize}
Hence Liouville's theorem gives us, for simply connected domains $\Omega$,  a
bijection from the set of all
locally univalent holomorphic mappings from $\Omega$ to $D_c$ (modulo the action of
the rigid motions of $D_c$) onto the set
$\Lambda_c(\Omega)$ of all conformal  metrics with constant curvature $c$.
 The next result is an immediate consequence of Liouville's theorem and 
shows that this map is ``universality preserving'':


\begin{prop} \label{thm:liouvilleuniversal}
Let $\Omega$ be a simply connected domain in $\C$ and $c \in\{-1,0,1\}$.
Suppose that $\lambda \in \Lambda_c(\Omega)$ and $f \in \M_{l.u.}(\Omega)$
with $f(\Omega) \subseteq D_c$ such that $$\lambda=f^*\lambda_{D_c}\,.$$ 
If $f$ is $\Phi$--universal in $\{g \in \M_{l.u.}(\Omega) \, : \, g(\Omega)
\subseteq D_c\}$, then $\lambda$ is $\Phi$--universal in $\Lambda_c(\Omega)$.
\end{prop}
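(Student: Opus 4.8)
The plan is to transfer the density statement through the pullback operator $\Psi\colon g\mapsto g^*\lambda_{D_c}$, exploiting the functoriality of pullback under composition. For any locally univalent self--map $\phi$ of $\Omega$ and any $g\in\M_{l.u.}(\Omega)$ with $g(\Omega)\subseteq D_c$, the chain rule together with the definition $\phi^*\nu(z)=(\nu\circ\phi)(z)\,|\phi'(z)|$ gives
\begin{equation*}
 \phi^*\bigl(g^*\lambda_{D_c}\bigr)(z)=\lambda_{D_c}\bigl(g(\phi(z))\bigr)\,|g'(\phi(z))|\,|\phi'(z)|=(g\circ\phi)^*\lambda_{D_c}(z)\,.
\end{equation*}
Applying this with $g=f$ and using $\lambda=f^*\lambda_{D_c}$, I obtain the key identity $\phi^*\lambda=(f\circ\phi)^*\lambda_{D_c}=\Psi(f\circ\phi)$ for every $\phi\in\Phi$. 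Since each $\phi\in\Phi$ is a locally univalent self--map of $\Omega$, the composition $f\circ\phi$ again lies in $\{g\in\M_{l.u.}(\Omega)\,:\,g(\Omega)\subseteq D_c\}$, so $\{\phi^*\lambda\,:\,\phi\in\Phi\}=\Psi\bigl(\{f\circ\phi\,:\,\phi\in\Phi\}\bigr)$, and the task reduces to showing that $\Psi$ sends the dense orbit of $f$ to a dense subset of $\Lambda_c(\Omega)$.

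Next I would carry out the density transfer directly. Fix an arbitrary $\mu\in\Lambda_c(\Omega)$. By Liouville's Theorem~\ref{thm:liouville} there is a $g\in\M_{l.u.}(\Omega)$ with $g(\Omega)\subseteq D_c$ and $g^*\lambda_{D_c}=\mu$; that is, $\Psi$ is onto $\Lambda_c(\Omega)$. Since $f$ is $\Phi$--universal in $\{g\in\M_{l.u.}(\Omega)\,:\,g(\Omega)\subseteq D_c\}$, there is a sequence $(\phi_n)$ in $\Phi$ with $f\circ\phi_n\to g$ locally $\chi$--uniformly in $\Omega$. Granting the continuity of $\Psi$ (addressed below), I then conclude
\begin{equation*}
 \phi_n^*\lambda=(f\circ\phi_n)^*\lambda_{D_c}=\Psi(f\circ\phi_n)\longrightarrow\Psi(g)=\mu
\end{equation*}
locally uniformly in $\Omega$. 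As $\mu\in\Lambda_c(\Omega)$ was arbitrary, this shows that $\{\phi^*\lambda\,:\,\phi\in\Phi\}$ is dense in $\Lambda_c(\Omega)$, i.e.\ $\lambda$ is $\Phi$--universal in $\Lambda_c(\Omega)$.

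The hard part will be the continuity of the pullback operator $\Psi$ from the set $\{g\in\M_{l.u.}(\Omega)\,:\,g(\Omega)\subseteq D_c\}$, carrying the topology of locally $\chi$--uniform convergence, into $C(\Omega)$ with the topology of locally uniform convergence. The obstruction is exactly the ``conformal factor'': convergence of the maps $g_n$ does not obviously control the derivatives $g_n'$, and in the spherical case $c=+1$ the maps may also have poles. For $c\in\{-1,0\}$ the maps are holomorphic (into $\D$ resp.\ $\C$), so on a compact set the limit takes values in a bounded region of $\C$, whence locally $\chi$--uniform convergence is locally uniform in the Euclidean sense; Weierstrass' theorem then yields $g_n'\to g'$ locally uniformly, and continuity of the smooth density $\lambda_{D_c}$ gives $g_n^*\lambda_{D_c}\to g^*\lambda_{D_c}$ locally uniformly. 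For $c=+1$ I would note that $g^*\lambda_{\hat{\C}}=2\,|g'|/(1+|g|^2)$ equals twice the spherical derivative $g^{\#}$, which is invariant under $w\mapsto 1/w$; working near points where $g$ is finite exactly as before, and near poles of $g$ by passing to $1/g_n\to 1/g$, one obtains $g_n^{\#}\to g^{\#}$ locally uniformly. This establishes continuity of $\Psi$ and closes the argument.
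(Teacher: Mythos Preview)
Your proof is correct and follows essentially the same route as the paper: pick $\mu\in\Lambda_c(\Omega)$, write $\mu=g^*\lambda_{D_c}$ via Liouville, approximate $g$ by $f\circ\phi_n$, and pass to the pullback. The paper compresses the continuity step you spell out (including the spherical--derivative argument for $c=+1$) into a single ``this clearly implies''; your added justification is sound and makes explicit what the paper leaves to the reader.
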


Note that Theorem \ref{thm:u4} follows directly from Proposition
\ref{thm:liouvilleuniversal} and 
\begin{itemize}
\item[(i)] Theorem \ref{thm:con_univ} if $c=0$;
\item[(ii)] Theorem \ref{thm:mer_loc_univ_birk} if $c>0$. 
\end{itemize}
Proposition \ref{thm:liouvilleuniversal} and Theorem \ref{thm:u2} also prove
the cases $c <0$ of Theorem \ref{thm:u3}.

\begin{proof}[Proof of Proposition \ref{thm:liouvilleuniversal}]
Let $\mu \in \Lambda_c(\Omega)$. By Liouville's theorem there exists a map $g \in
\M_{l.u.}(\Omega)$ with $g(\Omega) \subseteq D_c$ such that $\mu=g^*
\lambda_{D_c}$. Since $f$ is $\Phi$--universal in $\{g \in \M_{l.u.}(\Omega) \, : \, g(\Omega)
\subseteq D_c\}$ there is a sequence $(\phi_n)$ in $\Phi$ with the property
that 
$$ f \circ \phi_n \to g $$
locally $\chi$--uniformly in $\Omega$. This clearly implies
$$ \phi_n^* \lambda (z)=\lambda_{D_c} \left( (f \circ \phi_n)(z) \right)\, |
(f \circ \phi_n)'(z)| \to \lambda_{D_c} (g(z)) \, |g'(z)|=\mu(z)$$
locally uniformly in $\Omega$.
\end{proof}

\hide{In this final section, we  prove  Theorems \ref{thm:u4} and  Theorem
\ref{thm:harm_univ} (b). Our strategy is to use a classical representation
theorem for constantly curved conformal metrics due to Liouville and to apply
our univerality results for  locally univalent functions.

\medskip
 Scaling $\lambda(z)
\, |dz|$ by
$|c|$, we get the metric $|c| \lambda(c) \,|dz|$, where 
 $$ c^2 \kappa_{|c|\lambda}=\kappa_{\lambda} \, .$$
We henceforth restrict ourselves to the cases $c \in \{-1,0,+1\}$.

In order to state Liouville's theorem we need to introduce some notation.
Let $$\lambda_{\D}(z)\dz=\frac{2}{1-|z|^2}\dz$$ be the hyperbolic metric on
the unit disc $\D$ normalized in such a way that its  curvature is $-1$.
Recall that if we pullback $\lambda_{\D}(z)\dz$ by a locally univalent function $f\colon\Omega\rightarrow\D$ on a domain $\Omega
 \subseteq \C$ then we obtain a conformal metric  $f^*\lambda_{\D}(z)\dz$ on
 $\Omega$ with constant curvature $-1$.
In a similar way, one can produce conformal metrics with constant curvature $0$
resp.~$1$ on $\Omega$, by taking the pulback of the euclidean metric 
$$\lambda_{\C}(z)dz\coloneqq 1\dz$$
resp.~the spherical metric $$\lambda_{\hat{\C}}(z)\dz=\frac{2}{1+|z|^2}\dz\, .$$
Liouville \cite{liouville1853equation} has discovered that the converse
statement holds  for 
\textit{simply connected} domains. We use the following notation
$$ D_{c}:=\begin{cases} \quad\D & c=-1\,, \\
                       \quad\C \quad \text{ if } & c=0\,, \\
                       \quad\hat{\C} & c=+1 \,.
\end{cases} $$

\begin{thm}[Liouville]\label{thm:liouville}
 	Let $\Omega$ be a simply connected domain in $\C$ and $c \in\{-1,0,1\}$.
        The for each  $\lambda \in\Lambda_{c}(\Omega)$.
 	there exists a function $f \in \M_{l.u.}(\Omega)$ with
        $f(\Omega)\subseteq D_c$ such that  $$\lambda  = f^*\lambda_{D_c} \, .$$
In addition, $f$ is uniquely determined by $\lambda$ up to postcomposition with
a conformal automorphism $T \in \aut{D_c}$.
\end{thm}

\begin{thm}[Universal Liouville]
Let $\Omega$ be a simply connected domain in $\C$ and $c \in\{-1,0,1\}$.
Suppose that $\lambda \in \Lambda_c(\Omega)$ and $f \in \M_{l.u.}(\Omega)$
with $f(\Omega) \subseteq D_c$ such that $\lambda=f^*\lambda_{D_c}$. Then the
following conditions are equivalent.
\begin{itemize}
\item[(a)] $\lambda$ is universal in $\Lambda_c(\Omega)$.
\item[(b)] $f$ is universal in $\{f \in \M_{l.u.}(\Omega) \,
  : \, f(\Omega) \subseteq D_c \}$.
\end{itemize}
\end{thm}}

\bibliographystyle{plain}

\end{document}